\documentclass[12pt, a4paper]{article}
\usepackage{custarticle}

\chead{Special K\"ahler structures, cubic differentials and  hyperbolic metrics}

\title{Special K\"ahler structures, cubic differentials and hyperbolic metrics}
\author{Andriy Haydys and Bin Xu}
\date{10th March 2020}

\begin{document}
\maketitle

\begin{abstract}
  We obtain necessary conditions for the existence of special K\"ahler structures with isolated singularities on compact Riemann surfaces.
  We prove that these conditions are also sufficient in the case of the Riemann sphere and, moreover, we determine the whole moduli space of special K\"ahler structures  with fixed singularities.
  The tool we develop for this aim is a correspondence between special K\"ahler structures and pairs consisting of a cubic differential and a hyperbolic metric.
 \end{abstract}


\section{Introduction}

For reader's convenience, let us recall the definition of the affine special K\"ahler structure, which is the main object of study of this preprint.

\begin{definition}[\cite{Freed99_SpecialKaehler}]
	\label{Defn_spK}
	An \emph{(affine) special K\"ahler structure} on a manifold $\Sigma$ is a quadruple $(g,I,\omega, \nabla)$, where $(\Sigma,g,I,\omega)$ is a K\"ahler manifold with Riemannian metric $g$, complex structure $I$, and symplectic form $\omega(\cdot,\cdot)=g(I\cdot,\cdot)$, and $\nabla$ is a flat symplectic torsion-free connection on the tangent bundle $T\Sigma$ such that
	\begin{equation}
	\label{Eq_SpKaehlerCondition}
	(\nabla^{}_XI)Y=\, (\nabla^{}_Y I)X
	\end{equation}
	holds for all vector fields $X$ and $Y$.
\end{definition}

If $I$ is fixed, which is always assumed to be the case below, we say for simplicity that $(g,\nabla)$ is a special K\"ahler structure.

The notion of a special K\"ahler structure has its origin in physics~\cites{Gates84_SuperspaceFormulation_SpKaehler, deWitVanProyen84PotentialsSymmetries} and is the natural structure of the base of an algebraic integrable system~\cite{Freed99_SpecialKaehler}.
In particular, algebraic integrable systems appear naturally in gauge theory~\cites{Hitchin:87, SeibergWitten:94, SeibergWittenMonopoles:94,Nekrasov03_SWprepotential, Gaiotto12_N2Dualities}, where a special instance of an algebraic integrable system---the Seiberg--Witten curve---plays a central  r\^ole in the (physical) Seiberg--Witten theory.
Very recently, special K\"ahler structures on Riemann surfaces have been extensively studied from the perspective of $\cN =2$ superconformal field theory, see~\cite{ArgyreslueMartone17_SWgeomCoulombBranch} and references therein.

Examples of special K\"ahler structures can be found in~\cite{Freed99_SpecialKaehler, Haydys15_IsolSing_CMP, CalliesHaydys17_LocalModels_IMRN_online, GTZ:16, OO:18, CVZ:19}.
An elementary introduction to special K\"ahler geometry on Riemann surfaces can be found in~\cite{CalliesHaydys17_AffSpK2D}.

Note that a \emph{complete} special K\"ahler metric is necessarily flat~\cite{Lu99_NoteOnSpKaehler, BauesCortes01_SpKParabolicSpheres}.
Besides, singularities of fibers of an algebraic integrable system lead to singularities of the corresponding special K\"ahler structure.
This motivates studies of \emph{singular} special K\"ahler metrics as a natural  structure on bases of algebraic integrable systems.

\medskip

Associated to a special K\"ahler structure is the period map $\tau$, which takes values in the Siegel upper half-space~\cite{Freed99_SpecialKaehler}.
If $\Sigma$ is a Riemann surface, which is assumed to be the case below unless otherwise stated explicitly, $\tau$ takes values in the upper half-plane $\Sieg:=\{ z\in \C\mid \Im z>0 \}$, which is endowed with the standard hyperbolic metric $g_{\Sieg}=\bigl (\Im z\bigr)^{-2}|dz|^2$.

\begin{defn}
	\label{Defn_AssocHypMetr}
	Let $\Sigma$ be a Riemann surface.
	For a special K\"ahler structure $(g,\nabla)$ on  $\Sigma$ with the period map $\tau$ we call $\tilde g:=\tau^*g_{\Sieg}$ \emph{the associated hyperbolic metric}.
\end{defn}

Notice that $\tau$ depends on certain choices and, moreover, is defined locally only (or, equivalently, on the universal covering of $\Sigma$), however  $\tilde g$ is well-defined.
Also, $\tilde g$ may either degenerate or be singular at isolated points, hence, strictly speaking, $\tilde g$ is a metric outside of some discrete subset of $\Sigma$.
This is not a concern for us, since we are interested in singular special K\"ahler structures, which involves singular metrics anyway.

In Definition~\ref{Defn_AssocHypMetr} and below, a metric is said to be \emph{hyperbolic}, if its Gaussian curvature is constant and equals $-1$.
If $\tilde g$ is any hyperbolic metric on $\Sigma$, we say that $\tilde g$ \emph{represents a divisor
$\Sigma_{j=1}^n\, (\alpha_j-1)\,p_j$ with $0\leq \alpha_j\not=1$} if the following holds: If $\alpha_j =0$, then $\tilde g$ has a {\it cusp singularity} at $p_j$; If $\alpha_j>0$, $\tilde g$ has a {\it conical singularity of order} $\alpha_j-1$, i.e. $\tilde g$ has a conical angle of $2\pi\alpha_j$ at $p_j$ (See the precise explanation in Corollary \ref{Thm_SpKHyperbLoc}).

Recall~\cite{Freed99_SpecialKaehler} that for any special K\"ahler structure we can also construct  \emph{the associated cubic form} $\Xi$, which is a holomorphic section of $K_\Sigma^3$, where $K_\Sigma$ is the canonical bundle of $\Sigma$.
Throughout this manuscript we assume that $\Xi$ is non-zero.
This means that we exclude special K\"ahler structures $(g, \nabla)$, where $g$ is flat and $\nabla$ is the Levi--Civita connection of $g$.

Thus, to any special K\"ahler structure on a Riemann surface we can associate a pair $(\tilde g,\Xi)$ as above.
Our main result, \autoref{Thm_spKviaHyperbXi_short} below, states, roughly speaking, that for any pair $(\tilde g,\Xi)$ consisting of a hyperbolic metric possibly singular at isolated points and a meromorphic cubic form we can construct a special K\"ahler structure, whose associated hyperbolic metric and associated cubic form are $\tilde g$ and $\Xi$ respectively.

The precise statement requires some notations, which are introduced next.
Denote by $(r, \theta)$ the polar coordinates on $\C^*$, where $\theta\in (0,2\pi)$, and  put $\rho:=\log r$,
\[
\mathbbm 1_2:=
\begin{pmatrix}
1 & 0\\
0 & 1
\end{pmatrix},\qquad\text{and}\qquad
\rI_2:=
\begin{pmatrix}
0 & -1\\
1 & \phantom{-}0
\end{pmatrix}.
\]
For any $k\in\Z$ and $b\in\C\setminus\{ 0\}$ the following
\begin{equation}
\label{Eq_ModelLogSing}
\begin{aligned}
g_{k } &= -|b|r^{k}\log r\, |dz|^2,\\
\omega_{k,\nabla} &=
\frac 12
\left(k\rI_2 +
\begin{pmatrix}
\Im\!\left(\tfrac{b}{|b|} e^{ik\theta}\right) & -1+\Re\!\left(\tfrac{b}{|b|} e^{ik\theta}\right)\\
1 + \Re\!\left(\tfrac{b}{|b|} e^{ik\theta}\right) &  -\Im\!\left(\tfrac{b}{|b|} e^{ik\theta}\right)
\end{pmatrix} \rho^{-1} \right) d\theta  \\
&+
\frac 12\left(k\mathbbm 1_2	+
\begin{pmatrix}
1 - \Re\!\left(\tfrac{b}{|b|} e^{ik\theta}\right) & \Im\!\left(\tfrac{b}{|b|} e^{ik\theta}\right)\\
\Im\!\left(\tfrac{b}{|b|} e^{ik\theta}\right) & 1+\Re\!\left(\tfrac{b}{|b|} e^{ik\theta}\right)
\end{pmatrix}\rho^{-1}
\right) d\rho
\end{aligned}
\end{equation}
is a special K\"ahler structure on the punctured disc $\{0<r<1\}$~\cite{CalliesHaydys17_LocalModels_IMRN_online}.
Here $\om_{k,\nabla}$ is the connection one-form of $\nabla$ \wrt the trivialization $(\partial_x, \partial_y)$, where $z=x+ yi$ is the standard coordinate on $\C$.
By~\cite{CalliesHaydys17_LocalModels_IMRN_online}*{Thm.\,5}, \eqref{Eq_ModelLogSing} together with flat cones
\begin{equation}
\label{Eq_FlatConicalStr}
g^c_\b = r^{\b} |dz|^2,\qquad \omega^c_{\b,\nabla}=\omega_{LC} = \frac \b 2
\bigl ( \mathbbm 1_2\, d\rho + \rI_2\, d\theta \bigr),
\end{equation}
where $\b\in\R$, are local models of isolated singularities of affine special K\"ahler structures in complex dimension one provided the associated cubic form is meromorphic.

\begin{defn}\cite{CalliesHaydys17_LocalModels_IMRN_online}*{Def.\,6}
	We say that a special K\"ahler structure $(g,\nabla)$ on the punctured disc has \emph{a conical singularity} of order $\tfrac 12\b$ at the origin, if $(g,\om_\nabla)$ is asymptotic to $(g^c_\b, \om^c_{\b,\nabla})$.
	We say that $(g,\nabla)$ has \emph{a logarithmic singularity} of order $\tfrac 12 k,\ k\in \Z$, at the origin, if  $(g,\om_\nabla)$ is asymptotic to $(g_{k}, \om_{k,\nabla})$.
\end{defn}

\begin{remark}
	Geometrically, \eqref{Eq_FlatConicalStr} can be thought of as follows:
	\begin{itemize}[itemsep=-2pt,topsep=2pt]
		\item If $\beta >-2$, \eqref{Eq_FlatConicalStr}  is a cone of total angle $\pi (\beta + 2)$.
		\item If $\beta = -2$,  \eqref{Eq_FlatConicalStr} is a cylindrical end with the origin at infinity.
		\item If $\beta < -2$,  \eqref{Eq_FlatConicalStr} is a conical end of total angle $-\pi (\beta + 2)$, where the origin is at infinity.
	\end{itemize}
	In other words, in the case of conical singularity the corresponding special K\"ahler metric is either \emph{locally conical}, \emph{asymptotically cylindrical}, or \emph{asymptotically conical} respectively.
\end{remark}

With this at hand, we can state our main result as follows.

\begin{thm}
	\label{Thm_spKviaHyperbXi_short}
	Let $\Xi$ be a meromorphic cubic differential on a Riemann surface $\Sigma$ (not necessarily compact)  with the divisor $(\Xi)=\sum_{p\in\Sigma}\ord_p\Xi\, \cdot p$.
	Let also $\tilde g$ be a hyperbolic metric on $\Sigma$ representing a divisor $D$.
	Then there is a unique special K\"ahler structure $(g, \nabla)$ on $\Sigma$ whose associated hyperbolic metric and associated cubic form are $\tilde g$ and $\Xi$ respectively.
	Moreover, $(g, \nabla)$ is smooth on $\Sigma_0:=\Sigma\setminus \bigl (\supp (\Xi)\cup \supp D\bigr )$ and the following also holds:
	\begin{enumerate}[(i),topsep=3pt,itemsep=1pt]
		\item A cusp singularity $p$ of $\tilde g$ is a logarithmic singularity of $(g,\nabla)$ of order $\tfrac 12 (\ord_p\Xi +1)$;
		\item A conical singularity $p$ of $\tilde g$ of order $\alpha$ is a conical singularity of $(g,\nabla)$ of order $\tfrac 12 (\ord_p\Xi -\alpha)$;
		\item	$(g,\nabla)$ has a conical singularity of order $\tfrac 12 \ord_p\Xi$ at a point $p\in\supp (\Xi)\setminus \supp D$.
	\end{enumerate}
\end{thm}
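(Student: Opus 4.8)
The plan is to reverse‑engineer $(g,\nabla)$ from the developing map of $\tilde g$ together with $\Xi$, exploiting the local parametrisation of special K\"ahler structures on a Riemann surface. Recall (see \cite{CalliesHaydys17_AffSpK2D}) that over a simply connected coordinate patch a special K\"ahler structure is the same thing as a pair $(w,\tau)$ of holomorphic functions with $dw\neq 0$ and $\Im\tau>0$: one has $g=\Im\tau\,|dw|^{2}$, the period map is $\tau$, the \emph{dual special coordinate} $w_{D}$ is any primitive of $\tau\,dw$, the flat symplectic torsion‑free connection $\nabla$ is the unique one for which the real $1$‑forms $\Re\,dw$ and $\Re\,dw_{D}$ are parallel, and the associated cubic form is $\tfrac{d\tau}{dw}(dw)^{3}$; conversely every such pair yields a special K\"ahler structure (flatness and vanishing torsion are immediate since $\{\Re\,dw,\Re\,dw_{D}\}$ is a closed coframe, using $d\tau\wedge dw=\dot\tau\dot w\,dz\wedge dz=0$). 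Crucially, $(g,\nabla)$ is unchanged when $\tau$ is replaced by $\tfrac{a\tau+b}{c\tau+d}$ and $(w_{D},w)$ by $(aw_{D}+bw,\,cw_{D}+dw)$ with $\bigl(\begin{smallmatrix}a&b\\c&d\end{smallmatrix}\bigr)\in\mathrm{SL}(2,\R)$ (since $\Im\tfrac{a\tau+b}{c\tau+d}=\Im\tau/|c\tau+d|^{2}$ while $dw\mapsto(c\tau+d)\,dw$, so $g$ is invariant, and $\{\Re\,dw,\Re\,dw_{D}\}$ changes by a constant matrix), or by $w\mapsto\pm w+\mathrm{const}$.

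Given $(\tilde g,\Xi)$, hyperbolicity of $\tilde g$ provides a holomorphic developing map $\tau\colon\widetilde\Sigma\to\Sieg$ with $\tau^{*}g_{\Sieg}=\tilde g$, unique up to post‑composition with $\mathrm{PSL}(2,\R)$; since the period map of a special K\"ahler structure is holomorphic \cite{Freed99_SpecialKaehler}, a special K\"ahler structure has associated hyperbolic metric $\tilde g$ precisely when its period map is such a developing map. Write $\Xi=\xi\,(dz)^{3}$ locally, set $\Sigma_{0}=\Sigma\setminus(\supp\Xi\cup\supp D)$, and note that on $\Sigma_{0}$ the metric $\tilde g$ is nondegenerate, so that $\dot\tau\neq 0$ there (from $\tilde g=|\dot\tau|^{2}(\Im\tau)^{-2}|dz|^{2}$), while $\xi\neq 0$. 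Hence $\xi/\dot\tau$ is a nonvanishing holomorphic function on $\Sigma_{0}$, and locally we may choose a holomorphic $w$ with $\dot w=\sqrt{\xi/\dot\tau}$. By the first paragraph the pair $(w,\tau)$ gives a special K\"ahler structure with period map $\tau$ — hence with associated hyperbolic metric $\tau^{*}g_{\Sieg}=\tilde g$ — and with cubic form $\dot\tau\dot w^{2}(dz)^{3}=\xi\,(dz)^{3}=\Xi$. Writing $\tilde g=h\,|dz|^{2}$ and using $|\dot w|^{2}=|\xi|/|\dot\tau|$ one reads off
\[
g=\frac{|\xi|}{\sqrt{h}}\,|dz|^{2},
\]
an expression manifestly independent of the coordinate; combined with the invariances above (the developing map is pinned only up to $\mathrm{PSL}(2,\R)$, the branch of $w$ only up to $w\mapsto\pm w+\mathrm{const}$, and around loops of $\Sigma_{0}$ the triple $(w,w_{D},\tau)$ is acted on by affine symplectic maps) this shows that $(g,\nabla)$ is globally well defined and smooth on $\Sigma_{0}$.

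It remains to analyse $(g,\nabla)$ near a point $p\in\supp\Xi\cup\supp D$. Fix a centred coordinate $z$ and put $\ell=\ord_{p}\Xi$, so $\xi\sim c\,z^{\ell}$. The local normal form of a hyperbolic metric near $p$ fixes the leading term of $h$, hence of $\dot\tau$: if $\tilde g$ is smooth at $p$ then $\dot\tau(0)\neq 0$; if $\tilde g$ has a cusp then $\tau\sim\tfrac{1}{2\pi i}\log z$ and $\dot\tau\sim\tfrac{1}{2\pi i z}$; and if $\tilde g$ has a conical singularity of order $\alpha$ (conical angle $2\pi(\alpha+1)$) then $\tau-\tau_{0}\sim c'\,z^{\alpha+1}$ and $\dot\tau\sim c'(\alpha+1)\,z^{\alpha}$. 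Substituting into $g=|\xi|\,h^{-1/2}|dz|^{2}$ gives respectively $g\sim\mathrm{const}\cdot r^{\ell}|dz|^{2}$, $g\sim-\mathrm{const}\cdot r^{\ell+1}\log r\,|dz|^{2}$, and $g\sim\mathrm{const}\cdot r^{\ell-\alpha}|dz|^{2}$ — precisely the leading terms of $g^{c}_{\ell}$, of $g_{\ell+1}$, and of $g^{c}_{\ell-\alpha}$ in \eqref{Eq_ModelLogSing}--\eqref{Eq_FlatConicalStr}, which already yields the orders claimed in (i)--(iii). Computing $\om_{\nabla}$ from $w$ and $w_{D}$ in the trivialisation $(\partial_{x},\partial_{y})$ and carrying the expansions one order further, one verifies that $(g,\om_{\nabla})$ is asymptotic, in the sense of \cite{CalliesHaydys17_LocalModels_IMRN_online}*{Def.\,6}, to $(g^{c}_{\beta},\om^{c}_{\beta,\nabla})$ resp.\ to $(g_{k},\om_{k,\nabla})$ with $\beta$ resp.\ $k$ as above; in particular $(g,\nabla)$ extends across $p$ as a special K\"ahler structure with a singularity of the stated type. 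Uniqueness then follows: any special K\"ahler structure on $\Sigma$ with associated data $(\tilde g,\Xi)$ has, on $\Sigma_{0}$, a developing map of $\tilde g$ as period map and $\Xi$ as cubic form, hence coincides with $(g,\nabla)$ on $\Sigma_{0}$ by the rigidity of the first paragraph, and therefore on all of $\Sigma$.

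The main obstacle is this last identification. The leading behaviour of $g$ drops out of $g=|\xi|\,h^{-1/2}|dz|^{2}$ at once, but matching the connection one‑form with the models — in particular reproducing the $(\log r)^{-1}$ corrections present in $\om_{k,\nabla}$ from the subleading terms of the developing map of a hyperbolic cusp — requires sufficiently sharp asymptotics for $\tau$ (equivalently, for solutions of Liouville's equation with cusp or conical singularities) together with careful bookkeeping to control the remainders in the weighted sense underlying the word ``asymptotic'' in \cite{CalliesHaydys17_LocalModels_IMRN_online}*{Def.\,6}.
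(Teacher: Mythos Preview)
Your construction is correct and arrives at the same special K\"ahler metric as the paper (up to the harmless normalisation $\Xi=\tfrac14\tfrac{d\tau}{dw}(dw)^3$ used there, which produces the factor $4$ in the paper's formula $g=4|\Xi_0|e^{-v}|dz|^2$), but the route is genuinely different. The paper does not build a special coordinate from the developing map; instead it works with the scalar PDE \eqref{Eq_SpKaehlerDiscXi}: writing $\tilde g=e^{2v}|dz|^2$ and observing that $\Delta v=e^{2v}$, one sets $u:=v-\log|\Xi_0|-2\log 2$ and checks directly that $\Delta u=16|\Xi_0|^2e^{2u}$, which by Proposition~\ref{Prop_spKinTermsOfKWXi} yields $(g,\nabla)$. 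Well-definedness is then a one-line coordinate-change computation rather than a monodromy analysis of $(w,w_D,\tau)$. Your approach is more geometric and makes the r\^ole of the period map transparent; the paper's is more elementary and sidesteps the choice of a branch of $\sqrt{\xi/\dot\tau}$ altogether.

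The one place where your argument is weaker is exactly where you flag it: matching $\omega_\nabla$ with the model connection forms. You do not need to do this by hand. The paper avoids it by invoking the classification theorem \cite{CalliesHaydys17_LocalModels_IMRN_online}*{Thm.\,5}: once $\Xi$ is meromorphic at an isolated singularity, the special K\"ahler structure is asymptotic to one of the two models \eqref{Eq_ModelLogSing}--\eqref{Eq_FlatConicalStr}, and the leading behaviour of $g$ (which you have computed correctly from $g=|\xi|\,h^{-1/2}|dz|^2$) already decides which model and with which order. Quoting that theorem closes your gap immediately; the subleading expansion of the cusp developing map and the weighted bookkeeping you worry about are unnecessary.
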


A somewhat more precise version of this result is  \autoref{Thm_spKviaHyperbXi_general}, which is proved in \autoref{Sect_spKPeriodMaps}.

We would like to point out that the correspondence of \autoref{Thm_spKviaHyperbXi_short} is pretty much explicit.
To demonstrate this, pick any local holomorphic coordinate $z$  and write $\tilde g = e^{2v}|dz|^2$ and $\Xi = \Xi_0(z)\, dz^3$.
Then the special K\"ahler metric of \autoref{Thm_spKviaHyperbXi_short} is given by
\[
g = 4\,|\Xi_0|e^{-v}|dz|^2.
\]
Using ~\cite{CalliesHaydys17_LocalModels_IMRN_online}*{(9),(11)}, one can also obtain an explicit formula for a connection one-form of $\nabla$ in terms of $v$ and $\Xi_0$.
The details are provided at the end of Section~\ref{Sect_Preliminaries}.


Furthermore, pick integers $k\ge  1, \ \ell\in [0,k]$, a $k$--tuple $\fp =(p_1,\ldots, p_k)$ of pairwise distinct points on $\Sigma$ as well as a   $k$--tuple $\fb = (\b_1,\ldots, \b_k)$ of real numbers.
If $(g,\nabla)$ is a special K\"ahler structure on $\Sigma$ away from $\{ p_1,\dots, p_k \}$, then each $p_j$ is an isolated singularity of the associated cubic form $\Xi$.
It turns out that in general $\Xi$ may have essential singularities at some of $p_j$'s (see Example~\ref{Ex_EssSing}), however in the definition below, we assume that $\Xi$ is meromorphic, i.e., each $p_j$ is a pole of $\Xi$ at worst.
\begin{defn}
\label{defn:moduli}
	We call
	\[
	\begin{aligned}
		\cM_k^\ell (\fp, \fb):=\bigl\{  (g,\nabla) \mid & (g,\nabla)\text{ is a special K\"ahler structure on }\Sigma \ \text{such that }\\
		&\Xi \text{ is meromorphic, } \Xi\not\equiv 0, \text{ and }\ \ord_{p_j} (g,\nabla) =\tfrac 12 \b_j \;\bigr \}/\R_{>0}
	\end{aligned}
	\]
	\emph{the moduli space of special K\"ahler structures with fixed singularities} (or, simply the moduli space of special K\"ahler structures for short),
	where $\ord_{p_j} (g,\nabla)$ is the order of $(g,\nabla)$ at $p_j$,  the first $\ell$ points of $\fp$ are of conic type, and the remaining points are all of logarithmic type.
	In particular,  $\beta_{\ell+1},\cdots, \beta_k$ are integers, if $\ell =0$ all points are  of logarithmic type, whereas for $\ell =k$ all points are of conic type.
	Notice that the group $\R_{>0}$ acts on the set of special K\"ahler structures via $\l\cdot (g,\,\nabla)= (\lambda g,\,\nabla)$.

	In addition, we call
	\[
	\cR_k^\ell (\fp, \fb):=\bigl\{  g\mid \exists\; \nabla  \text{ such that } [g,\nabla]\in \cM_k^\ell(\fp, \fb) \;\bigr \}/\R_{>0}
	\]
	\emph{the moduli space of special K\"ahler metrics.}
\end{defn}

Notice that at this point both $\cM_k^\ell (\fp, \fb)$ and $\cR_k^\ell (\fp, \fb)$ are defined as sets only.
We justify the name by introducing a topology on these sets in Section~\ref{Sect_NecessSuffCond} below.

\begin{thm}
	\label{Cor_NecessCondition_spK}	
	Let $\Sigma$ be a compact Riemann surface of genus $\gamma$.  If $\cM_k^\ell (\fp, \fb)$ is non-empty, then the following inequalities hold:
	\begin{equation}
	\label{Eq_NecessCondMixedSing}
	\begin{gathered}
	4 (\gamma -1 )<\b_1+\cdots+\b_k,\\ [\b_1]+\cdots+[\b_\ell]+\b_{\ell+1}+\cdots+\b_k\leq 6(\gamma -1 )+k-\ell,
	\end{gathered}
	\end{equation}
	where $[\b]$ is the greatest integer not exceeding $\b$.
	
	If $\cM_k^\ell (\fp, \fb)\neq \varnothing$, then it  is homeomorphic to an open dense subset of a sphere of an odd dimension $2N+1$.
	In this case the space $\cR_k^\ell (\fp, \fb)$ is homeomorphic to a Zariski open subset of $\CP^N$.
	In the special case $\ell =k$, i.e., all singularities are of conic type, $\cM_k^k(\fp, \fb)$ is homeomorphic to $S^{2N+1}$ and $\cR_k^k (\fp, \fb)$ is homeomorphic to  $\CP^N$.
	In particular these moduli spaces are compact.
	
	For $\Sigma = \P^1$ the space $\cM_k^\ell (\fp, \fb)$ is non-empty if and only if~\eqref{Eq_NecessCondMixedSing} holds.
	In this case,
	\begin{equation}
		\label{Eq_DimH0L-1}
		N=-6+k-\ell -\sum_{j=1}^k [\b_j].
	\end{equation}
	
\end{thm}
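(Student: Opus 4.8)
The plan is to run everything through the correspondence of \autoref{Thm_spKviaHyperbXi_short}, which converts the classification of special K\"ahler structures into a statement about meromorphic cubic differentials on $\Sigma$; from there the argument is line-bundle bookkeeping together with the existence and uniqueness of a hyperbolic metric representing a given divisor on a compact surface. So let $[g,\nabla]\in\cM_k^\ell(\fp,\fb)$, let $\Xi$ be its associated cubic form and $\tilde g$ its associated hyperbolic metric, with divisor $D=\sum_p c_p\,p$. Reading off (i)--(iii) of \autoref{Thm_spKviaHyperbXi_short}, the order of $(g,\nabla)$ at a point $p$ equals $\tfrac12(\ord_p\Xi-c_p)$ in all three cases, and $p$ is of logarithmic type exactly when $\tilde g$ has a cusp at $p$. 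Since $(g,\nabla)$ is smooth on $\Sigma\setminus\{p_1,\dots,p_k\}$, this forces $\ord_p\Xi\ge0$ for $p\notin\{p_1,\dots,p_k\}$, $\ord_{p_j}\Xi\ge[\b_j]$ for $1\le j\le\ell$, $\ord_{p_j}\Xi=\b_j-1$ for $\ell<j\le k$ (so $\b_{\ell+1},\dots,\b_k\in\Z$), and
\[
D=(\Xi)-\sum_{j=1}^{k}\b_j\,p_j .
\]
Put $\Delta:=\sum_{j=1}^{\ell}[\b_j]\,p_j+\sum_{j=\ell+1}^{k}(\b_j-1)\,p_j$, $V:=H^0\bigl(\Sigma,K_\Sigma^3(-\Delta)\bigr)$, and $V^\circ:=\{\Xi\in V\setminus\{0\}\mid\ord_{p_j}\Xi=\b_j-1\text{ for }\ell<j\le k\}$; the order conditions above say exactly that $\Xi\in V^\circ$. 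Conversely, any $\Xi\in V^\circ$ with $\deg D<\deg K_\Sigma$ (for $D=(\Xi)-\sum_j\b_j p_j$) yields, via \autoref{Thm_spKviaHyperbXi_short} and the uniqueness of the hyperbolic metric representing $D$, a unique member of $\cM_k^\ell(\fp,\fb)$. Because $\deg(\Xi)=\deg K_\Sigma^3=6(\gamma-1)$ is fixed, $\deg D=6(\gamma-1)-\sum_j\b_j$ is independent of $\Xi$, and $\deg D<\deg K_\Sigma=2\gamma-2$ reads $4(\gamma-1)<\b_1+\dots+\b_k$; so, whenever this holds, $\cM_k^\ell(\fp,\fb)\cong V^\circ/\R_{>0}$ with $\R_{>0}$ acting by scaling $\Xi$, and $\cR_k^\ell(\fp,\fb)\cong V^\circ/\C^*$ --- the latter since $g=4|\Xi_0|e^{-v}|dz|^2$ depends on $\Xi$ only through $|\Xi|$, while conversely $g_1=\lambda g_2$ ($\lambda>0$) forces $v_1-v_2$ to be harmonic off the singular set (using that $v_i$ solves the hyperbolic equation), hence $\Xi_1=\mu\Xi_2$ for some $\mu\in\C^*$.

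Two necessary conditions drop out. The first, $4(\gamma-1)<\b_1+\dots+\b_k$, is precisely the Gauss--Bonnet condition $\deg D<\deg K_\Sigma$ for the hyperbolic metric $\tilde g$ representing $D$ to exist, which it must when $\cM_k^\ell(\fp,\fb)\ne\varnothing$. The second follows because $\Xi$ is a nonzero section of $K_\Sigma^3(-\Delta)$, so that line bundle has nonnegative degree, i.e.\ $\deg\Delta\le6(\gamma-1)$, which unwinds to $[\b_1]+\dots+[\b_\ell]+\b_{\ell+1}+\dots+\b_k\le6(\gamma-1)+k-\ell$. For the topology, set $N:=\dim_\C V-1$, finite since $\Sigma$ is compact, and $\ge0$ whenever $\cM_k^\ell(\fp,\fb)\ne\varnothing$ (then $V^\circ\ne\varnothing$, so $V\ne0$). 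Each condition $\ord_{p_j}\Xi=\b_j-1$, $\ell<j\le k$, removes from $V$ the linear subspace $H^0\bigl(K_\Sigma^3(-\Delta-p_j)\bigr)$, which is proper whenever $\cM_k^\ell(\fp,\fb)\ne\varnothing$; so $V^\circ$ is $\C^{N+1}$ with the origin and $k-\ell$ proper linear subspaces deleted. Passing to the quotient by $\R_{>0}$ realizes $\cM_k^\ell(\fp,\fb)$ as $S^{2N+1}$ with $k-\ell$ round subspheres of real codimension $\ge2$ deleted, an open dense subset of $S^{2N+1}$; passing to the quotient by $\C^*$ realizes $\cR_k^\ell(\fp,\fb)$ as $\CP^N$ with $k-\ell$ projective subspaces deleted, a Zariski open subset of $\CP^N$. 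When $\ell=k$ nothing is deleted, so $\cM_k^k(\fp,\fb)=S^{2N+1}$ and $\cR_k^k(\fp,\fb)=\CP^N$, which are in particular compact. (For the topology introduced in \autoref{Sect_NecessSuffCond} these identifications are homeomorphisms, since both $\Xi\mapsto(g,\nabla)$ and $(g,\nabla)\mapsto\Xi$ are continuous.)

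Finally, for $\Sigma=\P^1$ we have $K_\Sigma^3=\mathcal O(-6)$, so $\dim_\C V=\max\{0,-5-\deg\Delta\}$ and $\dim_\C H^0\bigl(K_\Sigma^3(-\Delta-p_j)\bigr)=\max\{0,-6-\deg\Delta\}$. The second inequality of \eqref{Eq_NecessCondMixedSing} is precisely $\deg\Delta\le-6$, equivalently $\dim_\C V\ge1$, in which case each $H^0\bigl(K_\Sigma^3(-\Delta-p_j)\bigr)$ has codimension $1$ in $V$, hence is proper, so $V^\circ\ne\varnothing$ (a finite union of proper linear subspaces cannot cover $V$); and the first inequality is precisely $\deg D=-6-\sum_j\b_j<-2=\deg K_{\P^1}$, which, together with the automatic sign conditions on the coefficients of $D$ (they are $>-1$ everywhere except at $p_{\ell+1},\dots,p_k$, where they equal $-1$), is exactly what makes the hyperbolic metric representing $D$ exist for every $\Xi\in V^\circ$. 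Hence on $\P^1$ the two inequalities of \eqref{Eq_NecessCondMixedSing} are also sufficient, and $N=\dim_\C V-1=-6-\deg\Delta=-6+k-\ell-\sum_{j=1}^{k}[\b_j]$, which is \eqref{Eq_DimH0L-1}. I expect the main obstacle to be the first step --- reading the exact order conditions on $\Xi$ off (i)--(iii), in particular excluding poles of $\Xi$ on $\Sigma\setminus\{p_1,\dots,p_k\}$ and noticing that excess vanishing of $\Xi$ is allowed at conic points but not at cusps --- together with, for the sufficiency on $\P^1$, the existence theory for hyperbolic metrics representing a divisor.
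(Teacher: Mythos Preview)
Your proof is correct and follows essentially the same route as the paper: both reduce the problem to identifying $\cM_k^\ell(\fp,\fb)$ with $V^\circ/\R_{>0}$ (the paper's $H(\fp,\fb)/\R_{>0}$) via the correspondence of \autoref{Thm_spKviaHyperbXi_short} together with existence and uniqueness of hyperbolic metrics with prescribed singularities, and then read off the inequalities from Gauss--Bonnet and the degree of $K_\Sigma^3(-\Delta)$ --- the paper simply packages the first step as \autoref{NeceSuffCondition_spK} (citing Heins for the hyperbolic existence) and quotes it, whereas you inline it. The only stylistic difference is in the $\P^1$ sufficiency, where the paper exhibits the required cubic differential explicitly as a product $\prod (z-z_i)^{m_i}\,dz^3$, while you instead compute $\dim H^0(K_{\P^1}^3(-\Delta-p_j))$ by Riemann--Roch and invoke that finitely many proper linear subspaces cannot cover $V$; both arguments are standard and yield the same conclusion.
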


The proof of this theorem can be found in Section~\ref{Sect_NecessSuffCond}.

We also establish necessary and sufficient conditions for the existence of special K\"ahler structures on elliptic curves as well as describe the corresponding moduli spaces in \autoref{Cor_NScondition_EllCurves}.

While proving our main statements we obtain also other results, which may be of some interest.
In particular, as already mentioned above we construct an example of a special K\"ahler structure whose associated cubic form has essential singularities, see  Example~\ref{Ex_EssSing}.
To the best of our knowledge this is the first example of an associated cubic form with an essential singularity.

We also describe all special K\"ahler structures compatible with a fixed metric, see  Section~\ref{Sect_spKStrVsMetrics}.

Furthermore, let  $(g, \nabla)$  be a special K\"ahler structure on a compact Riemann surface with finitely many prescribed singularities.
Then the map  which assigns to  $(g, \nabla)$ the associated cubic form $\Xi$ is injective, see \autoref{NeceSuffCondition_spK} for a more precise statement.
This is surprising, since there is no reason to believe that $\Xi$, which a priori encodes the difference between the Levi--Civita and the flat symplectic connections only, should determine the whole special K\"ahler structure (with prescribed singularities).
Moreover, this is a truly global statement in the sense that the corresponding local statement is clearly false.

Finally, in the last section we construct  compactifications of the moduli spaces $\cM_k^\ell (\fp, \fb)$ and $\cR_k^\ell (\fp, \fb)$ in the case $\ell <k$.

\medskip

\textsc{Acknowledgements.} The authors wish to thank R.~Mazzeo for helpful discussions and an anonymous referee for useful comments.
A.H. was partially supported by the Simons Collaboration on Special Holonomy in Geometry, Analysis, and Physics. B. X. wishes to thank Institute of Mathematics of Freiburg University for her hospitality where he did part of this work with A. H. in February of 2018.
B. X. was partially supported by the National Natural Science Foundation of China (grant nos. 11571330 and 11971450) and
the Fundamental Research Funds for the Central Universities.
Both authors express their gratitude to  the Research Training Group 1821 ``Cohomological Methods in Geometry'' for its support during the work on this project.

\section{Preliminaries}
\label{Sect_Preliminaries}

Let $\Om\subset \C$ be any domain, which is viewed as being equipped
with a holomorphic coordinate $z=x+y i$ and the flat Euclidean metric
$|d z|^2=d x^2+ d y^2$.
We assume that any element of $H^1(\Om;\R)$ can be represented by a co-closed 1-form.

Write a special K\"ahler metric $g$ on $\Om$ in the form
\[
g\, =\, e^{-u}|d z|^2.
\]
Using the global trivialisation of $T\Om$ provided by the real
coordinates $(x,y)$ the connection $\nabla$ is described by its
connection $1$-form $\om^{}_\nabla\in\Om^1\bigl(\Om;\gl(2,\R)\bigr)$.
A computation shows~\cite{CalliesHaydys17_AffSpK2D} that
$\om^{}_\nabla$ can be written in the form
\begin{equation}
\label{Eq_Conn1Form}
\om^{}_\nabla\, = \,
\begin{pmatrix}
\om^{}_{11} & -*\om^{}_{11}\\
*\om^{}_{22} & \phantom{-* }\om^{}_{22}
\end{pmatrix},
\end{equation}
where
\begin{equation}
	\label{Eq_ConnEntriesDmain}
2\,\om^{}_{11}=  e^u(d h +2\,\psi)-d u,\qquad
2\,\om^{}_{22}=  -e^u(d h +2\,\psi)-d u.
\end{equation}
Here $*$ denotes the Hodge star operator with respect to the flat
metric, $h$ is a smooth function, and $\psi$ is a  1-form.
These data are subject to the equation
\begin{equation}
\label{Eq_SpKaehlerOnDomain}
\Delta h = 0,\qquad (d + d^*)\psi = 0,\qquad \Delta u =| d h + 2\psi |^2 e^{2u},
\end{equation}
where $\Delta = \partial_{xx}^2 + \partial_{yy}^2$.
Moreover, given any triple $(h,u,\psi)$ satisfying~\eqref{Eq_SpKaehlerOnDomain} the metric $g=e^{-u}|dz|^2$ together with $\omega_\nabla$, which is given by~\eqref{Eq_Conn1Form} and~\eqref{Eq_ConnEntriesDmain}, constitutes a special K\"ahler structure on $\Om$ (with its complex structure inherited from $\C$).

If $\Om$ is the punctured disc $B_1^*$, any closed and co-closed 1-form can be written as $a\varphi$, where $\varphi$ is a generator of $H^1(B_1^*;\R)$.
For example, we can fix
\[
\varphi= \frac{y\, d x - x\, d y}{x^2 + y^2}=-d\,\big({\rm arg}\, (x+iy)\big).
\]

Hence, a special K\"ahler structure on the punctured disc can be described in terms of solutions of the following equations
\begin{equation}
\label{Eq_SpKaehlerDisc}
\Delta h= 0,\qquad \Delta u = |d h + a\varphi|^2 e^{2u},
\end{equation}
where $h,u\in C^\infty (B_1^*)$ and $a\in\R$.

If $(h, u,a)$ is a solution of~\eqref{Eq_SpKaehlerDisc}, the associated holomorphic cubic form of the corresponding special K\"ahler structure is
\begin{equation*}
	\Xi\, =\, \Xi^{}_0\, d z^3\, =\, \frac 12\Bigl ( \frac a{2z} -\frac {\partial h}{\partial z} i\Bigr)\; d z^3.
\end{equation*}

\begin{rem}
	Tracing through the description of special K\"ahler structures in terms of solutions of~\eqref{Eq_SpKaehlerDisc} as given in~\cite{CalliesHaydys17_AffSpK2D}, it is easy to see that the function $h$ is defined only up to a constant.
	In other words, if $c$ is any real constant,  $(h, u, a)$ and $(h+c, u, a)$ determine equal special K\"ahler structures.
\end{rem}

A straightforward computation shows that $|dh +a\varphi|^2 = 16|\Xi_0|^2 = 16|\Xi|^2$.
Hence, the second equation of~\eqref{Eq_SpKaehlerDisc} can be written as
\begin{equation}
\label{Eq_SpKaehlerDiscXi}
\Delta u = 16\,|\Xi|^2 e^{2u},
\end{equation}
which implies in particular that the Gaussian curvature of the special K\" ahler metric $g=e^{-u}|dz|^2$ equals $8|\Xi|_g^2\geq 0$.
Furthermore, write $\Xi_0 (z) = \mathring \Xi_0(z) + Az^{-1}$, where $A\in\C$ is the residue of $\Xi_0$ at the origin and denote by $\Eta$ a primitive of $\mathring\Xi_0$.
Notice that $\Eta$ is well-defined up to a constant.
Define
\begin{equation}
	\label{Eq_haViaXi}
h :=-4\Im \Eta -4\Im A\, \log |z|\qquad\text{and}\qquad a := 4\Re A.
\end{equation}
Using $\partial_z\Im \Eta = \tfrac 1{2i}\partial_z\Eta$ we compute
\[
\frac 12\Bigl ( \frac a{2z} -\frac {\partial h}{\partial z} i\Bigr) = \Xi_0(z).
\]
The upshot of this computation is that $\Xi_0$ determines and is determined by $h$ and $a$.

\medskip

Slightly more generally, let $\Om=\tilde\Om\setminus\{ p_1,\dots, p_k \}$, where $\tilde\Om$ is a simply connected domain in $\C$.
Then any closed and co-closed 1-form $\psi$ representing a non-trivial cohomology class can be written as $\sum_{j=1}^k a_k\varphi_{k}$, where
$\varphi_k := -d\,\big({\rm arg}\, (z-p_k)\big)$.
It is easy to see that the above discussion can be repeated verbatim in this case too leading to the following result.

\begin{proposition}
	\label{Prop_spKinTermsOfKWXi}
	Let $\Om = \tilde{\Om}\setminus\{ p_1,\dots,p_k \}$, where $\tilde\Om$ is a simply connected domain in $\C$.
	Any pair $(u,\Xi)$ satisfying~\eqref{Eq_SpKaehlerDiscXi} determines a  special K\"ahler structure on $\Om$ such that the corresponding associated cubic form is $\Xi$. Conversely, any special K\"ahler structure on $\Om$ determines a solution of~\eqref{Eq_SpKaehlerDiscXi}.\qed
\end{proposition}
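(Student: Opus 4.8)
The plan is to route everything through the triple $(h,u,\psi)$ of \eqref{Eq_SpKaehlerOnDomain}. Recall that on a domain satisfying the stated cohomology assumption, special K\"ahler structures correspond to solutions $(h,u,\psi)$ of \eqref{Eq_SpKaehlerOnDomain} with $g=e^{-u}|dz|^2$ and $\om_\nabla$ given by \eqref{Eq_Conn1Form}--\eqref{Eq_ConnEntriesDmain}, and that the associated cubic form is read off from $(h,\psi)$ by the formula established in the punctured-disc case. All of this local machinery is already available, so the only genuinely new point is the bookkeeping of residues over the $k$ punctures.

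First I would go from $(u,\Xi)$ to a special K\"ahler structure. Write $\Xi=\Xi_0\,dz^3$ and split off the Laurent principal parts at the punctures, $\Xi_0=\mathring\Xi_0+\sum_{j=1}^{k}A_j\,(z-p_j)^{-1}$, where $A_j\in\C$ is the coefficient of $(z-p_j)^{-1}$ in the Laurent expansion of $\Xi_0$ at $p_j$. Then $\mathring\Xi_0$ is holomorphic on $\Om$ and the period of $\mathring\Xi_0\,dz$ around each $p_j$ vanishes, so $\mathring\Xi_0\,dz$ is exact on $\Om$ (the small loops about the $p_j$ generate $H_1(\Om)$ because $\tilde\Om$ is simply connected); let $\Eta$ be a single-valued holomorphic primitive. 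Imitating \eqref{Eq_haViaXi}, I would set
\[
h:=-4\,\Im\Eta-4\sum_{j=1}^{k}\Im A_j\,\log|z-p_j|,\qquad
a_j:=4\,\Re A_j,\qquad \psi:=\tfrac12\sum_{j=1}^{k}a_j\,\varphi_j,
\]
so that $h$ is harmonic and $\psi$ is closed and co-closed; localizing the identity $|dh+a\varphi|^2=16|\Xi_0|^2$ near each puncture gives $|dh+2\psi|^2=16|\Xi|^2$, so the hypothesis $\Delta u=16|\Xi|^2e^{2u}$ is exactly the third equation of \eqref{Eq_SpKaehlerOnDomain}. Hence $(h,u,\psi)$ determines a special K\"ahler structure on $\Om$, and using $\frac{\partial}{\partial z}\Im\Eta=\frac1{2i}\frac{\partial}{\partial z}\Eta$ one checks, as on the punctured disc, that its associated cubic form equals $\tfrac12\bigl(\sum_j\tfrac{a_j}{2(z-p_j)}-i\,\frac{\partial h}{\partial z}\bigr)dz^3=\Xi$. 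Since the constant ambiguity in $\Eta$ (equivalently in $h$) does not change the structure, it is canonically attached to $(u,\Xi)$.

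Conversely, given a special K\"ahler structure on $\Om$, I would write $g=e^{-u}|dz|^2$; since $H^1(\Om;\R)$ is spanned by the closed and co-closed forms $\varphi_j$, the connection form can be put in the form \eqref{Eq_Conn1Form}--\eqref{Eq_ConnEntriesDmain}, yielding $(h,u,\psi)$ solving \eqref{Eq_SpKaehlerOnDomain}. Its associated cubic form $\Xi$ satisfies $|dh+2\psi|^2=16|\Xi|^2$, so the third equation of \eqref{Eq_SpKaehlerOnDomain} reads $\Delta u=16|\Xi|^2e^{2u}$, which is \eqref{Eq_SpKaehlerDiscXi}. Together with the first part this exhibits the two assignments as mutually inverse.

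The hard part is not the analysis, which is local and already done, but the global bookkeeping in the forward construction: one must check that $\mathring\Xi_0\,dz$ has vanishing periods (so that $\Eta$ exists as a single-valued function) and that the residue data $A_j$ are split correctly---imaginary parts into the harmonic function $h$ via the terms $\Im A_j\,\log|z-p_j|$, real parts into the harmonic $1$-form $\psi$ via $\Re A_j\,\varphi_j$---so that no extraneous harmonic $1$-form is introduced and the cubic form reconstructed from $(h,\psi)$ is exactly $\Xi$. This is precisely the Hodge-type decomposition of a closed and co-closed $1$-form on $\Om$ along the basis $\{\varphi_j\}$ of $H^1(\Om;\R)$ modulo an exact remainder absorbed into $dh$. \qed
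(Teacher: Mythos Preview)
Your argument is correct and follows precisely the route the paper takes: the proposition is stated with an immediate \qed, and the paragraph following it spells out exactly the construction you give---splitting off the residues $A_j$, defining $h$ via $-4\Im H-4\sum_j\Im A_j\,\log|z-p_j|$ and $a_j=4\Re A_j$, and assembling $(h,u,\psi)$ from there. You are in fact slightly more careful than the paper in justifying why $\mathring\Xi_0$ admits a single-valued primitive on $\Om$ (via the vanishing of its residues and the fact that the small loops about the $p_j$ generate $H_1(\Om)$), and your normalization $\psi=\tfrac12\sum_j a_j\varphi_j$ is the one consistent with \eqref{Eq_ConnEntriesDmain} and \eqref{Eq_SpKaehlerDisc}.
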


%
%

For the sake of clarity, let us spell the correspondence in the above proposition.
Thus, if $(u,\; \Xi=\Xi_0\, dz^3)$ is a solution of~\eqref{Eq_SpKaehlerDiscXi}, put $g=e^{-u}|dz|^2$.
Also, write $\Xi_0 (z) = \mathring\Xi_0(z) + \sum_j A_j(z-p_j)^{-1}$, where $A_j$ is the residue of $\Xi_0$ at $p_j$.
If $H$ is a primitive of $\mathring\Xi_0$, put
\begin{equation*}
h:= -4\Im H - 4\sum_{j=1}^k  \bigl (\Im A_j\bigr ) \log |z-p_j|\qquad\text{and}\qquad a_j:= 4\Re A_j.
\end{equation*}
Then the corresponding special K\"ahler structure is given by~\eqref{Eq_Conn1Form} and~\eqref{Eq_ConnEntriesDmain} with $\psi = \sum_j a_j\varphi_j$.

\section{Special K\"ahler structures and the period maps}
\label{Sect_spKPeriodMaps}

Let $(\Sigma, g, I, \om, \nabla)$ be a special K\"ahler structure, where $\dim_\C \Sigma =n$.
Denote by $\cU$ the corresponding affine structure.
This means that $\cU$ is a covering of $M$ by open sets; Moreover, each $U\in\cU$ is equipped with  a $2n$-tuple of holomorphic functions $(z_1,\dots, z_n; w_1,\dots, w_n)$, where $(z_1,\dots, z_n)$ and $(w_1,\dots, w_n)$ are conjugate special holomorphic coordinates on $U$~\cite{Freed99_SpecialKaehler}.
If $\tilde U\in\cU$ is another open set equipped with $(\tilde z, \tilde w)$, then we have a relation
\[
\begin{pmatrix}
z\\
w
\end{pmatrix}
=P
\begin{pmatrix}
\tilde z\\
\tilde w
\end{pmatrix}
+
\begin{pmatrix}
a\\
b
\end{pmatrix},
\]
where $P\in \Sp(2n;\R)$ and $a,b\in\C^n$ are some constants.

Denote
\[
\tau_{jk}=\frac{\partial w_k}{\partial z_j}.
\]
Then the matrix $\tau:=(\tau_{jk})$ is symmetric and $\Im\tau$ is positive definite.
In fact, $\om = \frac i2\sum \Im \tau_{jk} dz_j\wedge d\bar z_k$.
In particular, we have a holomorphic map
\[
\tau\colon U\to \Sieg_n:=\bigl\{  Z\in M_n(\C)\mid Z^t=Z,\ \Im Z\text{ is positive definite}\,\bigr\}
\]
whose target space is the Siegel upper half-space.

Recall that the group $Sp(2n,\R)$ acts on $\Sieg_n$ via
\[
P\cdot Z = (AZ + B)(CZ + D)^{-1},\qquad\text{where }
P=\begin{pmatrix}
A & B\\
C& D
\end{pmatrix},
\]
and the unique $Sp(2n,\R)$-invariant metric is given by $g_{\Sieg_n}=\tr \bigl ( (Y^{-1} dZ) (Y^{-1}d\bar Z) \bigr)$, where $Y=\Im Z$.

If $\tilde \tau$ is a map corresponding to the chart $\tilde U$, then the corresponding period maps are related by
\[
\tau = (D\tilde \tau + C)(A\tilde \tau +B)^{-1}=\tilde P\cdot\tau,\qquad\text{where}\quad
\tilde P=
\begin{pmatrix}
D & C\\
B & A
\end{pmatrix}\in \Sp(2n,\R).
\]
Hence,  $\tau^*g_{\Sieg_n}$ does not depend on the choice of an affine patch.
As already explained in the introduction, if $n=1$,  $\tilde g:=\tau^*g_{\Sieg_1}$ is a constant negative curvature metric, which we call the associated hyperbolic metric.

While the pull-back metric is defined in any dimension, the case  $n=1$ has some special features.
Indeed, in this case $\Sigma$ is a Riemann surface,  $\Sieg=\Sieg_1$ is the upper half-plane so that $\tau$ is a local biholomorphism except perhaps at isolated points.
Hence, $\tilde g$ is non-degenerate on $\Sigma$  outside of some discrete subset.
Moreover, the subset where $\tilde g$ degenerates is easy to describe, see \autoref{Prop_DegenLocusAssHypMetric} below.

More importantly, in the case $n=1$  the metric $g_{\Sieg_1}$ coincides with the standard hyperbolic metric $\bigl (\Im z\bigr)^{-2}|dz|^2$.
Hence, the pull-back metric $\tilde g$ is also hyperbolic where it is non-degenerate.

\begin{rem}
	Recall, that a holomorphic map $\tau\colon \Sigma\to\Sieg$, which may be multi-valued, is called a developing map of a hyperbolic metric $\tilde g$, if $\tilde g =\tau^*g_{\Sieg}$.
	Hence, the very definition yields that the period map of a special K\"ahler structure  is a developing map of the associated hyperbolic metric.
\end{rem}

\begin{example}
	Consider the following local example: $\Sigma$ is the punctured unit disc in $\C$ equipped with the metric $g=-\log |z|\, |dz|^2$, which is special K\"ahler.
	Then $z$ is a special holomorphic coordinate with the conjugate given by $w =2i(z\log z - z)$.
	Hence, the period map is $\tau =2 i\log z$.
	Of course, $\tau$ is multivalued, but all values of $\tau$ are related by M\"obious transformations and therefore $\tau^*g_{\Sieg}$ is well defined and equals $(|z|\log |z|)^{-2} |dz|^2$, which is the standard Poincar\'{e} metric on the punctured disc.
\end{example}

\begin{example}
	Let $\Sigma$ be the upper half-plane $\cH$ equipped with the following special K\"ahler structure~\cite{Freed99_SpecialKaehler}*{Rem.\ 1.20}:
	\[
	g=y\, |dz|^2,\qquad \om_\nabla =\frac 1y
	\begin{pmatrix}
	dy & dx\\
	0 & 0
	\end{pmatrix},
	\]
	where $z=x+yi$ is a coordinate on $\cH$.
	
	It is easy to check that $(-iz, -\tfrac i2 z^2)$ is a pair of special holomorphic conjugate coordinates.
	Hence, $\tau(z)=z$, which means that $\tau^*g_{\Sieg}=g_{\Sieg}$.
\end{example}

\medskip

It will be useful below to have a relation between $\Xi$ and $\tau$.
Thus, if $Z$ is a special holomorphic coordinate, we have
\begin{equation}
	\label{Eq_XiInSpecCoord}
\Xi = \frac 14\,\frac {d\tau}{dZ}\, dZ^3.
\end{equation}
Then, for an arbitrary holomorphic coordinate $z$ we obtain
\[
\Xi = \Xi_0\, dz^3 = \frac 14\,\frac {d\tau}{dZ}\, dZ^3 = \frac 14\,\frac {d\tau}{dz}\frac {dz}{dZ}\left (\frac {dZ}{dz}\right )^3 dz^3 = \frac 14\,\frac {d\tau}{dz}\left (\frac {dZ}{dz}\right )^2dz^3,
\]
which yields in turn
\begin{equation}
\label{Eq_RelationTauXiZ}
\frac {d\tau}{dz} = 4\,\Xi_0\cdot \left (\frac {dZ}{dz}\right )^{-2}.
\end{equation}

Notice in particular, that we have the following statement, which will be useful below.
\begin{proposition}
	\label{Prop_DegenLocusAssHypMetric}
	Let $p$ be a regular point of a special K\"ahler structure on a Riemann surface.
	Then the associated hyperbolic metric degenerates at $p$ if and only if $\Xi (p) =0$.\qed
\end{proposition}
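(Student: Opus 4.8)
The plan is to compute the associated hyperbolic metric $\tilde g$ in a local holomorphic coordinate and to read off its degeneracy locus directly from the relation between the period map $\tau$ and the cubic form $\Xi$ recorded in \eqref{Eq_RelationTauXiZ}. First I would fix a holomorphic coordinate $z$ centred at $p$ together with a local branch of $\tau$; since any two branches differ by an element of $\Sp(2,\R)$ acting on $\Sieg$ by Möbius transformations, which are biholomorphisms, the vanishing of $d\tau/dz$ — and hence the conclusion — does not depend on this choice. Pulling back $g_\Sieg=(\Im w)^{-2}|dw|^2$ then gives
\[
\tilde g \,=\, (\Im \tau)^{-2}\,\Bigl|\tfrac{d\tau}{dz}\Bigr|^2\,|dz|^2 .
\]
Because $p$ is a regular point, $\tau(p)$ is an honest point of the upper half-plane, so $\Im\tau(p)>0$ and the factor $(\Im\tau)^{-2}$ is smooth and strictly positive near $p$. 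Consequently $\tilde g$ degenerates at $p$ if and only if $d\tau/dz$ vanishes at $p$.

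Next I would invoke \eqref{Eq_RelationTauXiZ}: choosing a special holomorphic coordinate $Z$ on an affine chart containing $p$, one has $d\tau/dz = 4\,\Xi_0\,(dZ/dz)^{-2}$, where $\Xi=\Xi_0\,dz^3$. Since $p$ is regular, $Z$ is a genuine holomorphic coordinate on a neighbourhood of $p$, so $dZ/dz$ is holomorphic and nowhere zero there, and therefore $(dZ/dz)^{-2}$ is holomorphic and nonvanishing near $p$. It follows that $d\tau/dz$ vanishes at $p$ precisely when $\Xi_0$ does, i.e.\ precisely when $\Xi(p)=0$, which is the claim.

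There is essentially no hard part once \eqref{Eq_RelationTauXiZ} is available; the only point that requires a word of care is the assertion $dZ/dz\neq 0$ at regular points, which is exactly the statement that special holomorphic coordinates are genuine coordinates off the singular set, together with the observation — used implicitly above — that both the degeneracy of $\tilde g$ and the vanishing of $\Xi$ are intrinsic, independent of the auxiliary choices of $z$, of $Z$, and of the branch of $\tau$.
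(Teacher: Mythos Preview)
Your argument is correct and is precisely the computation the paper has in mind: the proposition appears with a \qed\ immediately after \eqref{Eq_RelationTauXiZ}, indicating that the authors regard it as an immediate consequence of that relation together with the pull-back formula for $g_\Sieg$. You have simply spelled out the details they left implicit.
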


The next result is the key ingredient in the proof of our main result, \autoref{Thm_spKviaHyperbXi_short}.

\begin{lem}\label{Lem_spKandHyp_loc}
	Let $\Om$ be as in~\autoref{Prop_spKinTermsOfKWXi}.
	\begin{enumerate}[(i)]
		\item \label{It_AssocHyperbMetr}
		Let $\bigl (g =e^{-u}|dz|^2,\;\nabla\bigr )$ be a special K\"ahler structure on $\Om$.
		Then the associated hyperbolic metric, which is defined on $\Omega\setminus \Xi^{-1}(0)$, is given by $\tilde g = e^{2v}|dz|^2$, where
		\begin{equation}
		\label{Eq_UVrelation}
		v = u +\log |\Xi_0| +2\log 2.
		\end{equation}
		\item \label{It_HyperbMandXi_spK}
		Given any hyperbolic metric $\tilde g =e^{2v}|dz|^2$ and any holomorphic cubic form $\Xi=\Xi_0(z)\, dz^3$ on $\Om$, there is a unique special K\"ahler structure $(g,\nabla)$ on $\Om\setminus \Xi^{-1}(0)$ such that $g=e^{-u}|dz|^2$, where $u$ is determined by~\eqref{Eq_UVrelation}, and $\Xi$ is the associated cubic form.
	\end{enumerate}
\end{lem}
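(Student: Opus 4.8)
The plan is to reduce both statements to the scalar PDE description of special Kähler structures from \autoref{Prop_spKinTermsOfKWXi}, where a special Kähler structure on $\Om$ with associated cubic form $\Xi$ corresponds to a solution $u$ of $\Delta u = 16|\Xi|^2 e^{2u}$, i.e. to a conformal metric $g = e^{-u}|dz|^2$ of Gaussian curvature $8|\Xi|^2_g \ge 0$. For part~\eqref{It_AssocHyperbMetr} I would start from such a $(g,\nabla)$, pick a special holomorphic coordinate $Z$ near a regular point, and use the relation \eqref{Eq_RelationTauXiZ}, namely $d\tau/dz = 4\,\Xi_0\,(dZ/dz)^{-2}$, together with the fact that in the coordinate $Z$ the metric is $g = (\Im\tau)\,|dZ|^2$ (from $\om=\tfrac i2\Im\tau\,dZ\wedge d\bar Z$). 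Pulling back the standard hyperbolic metric gives $\tilde g = \tau^* g_{\Sieg} = (\Im\tau)^{-2}|d\tau|^2$; rewriting $|d\tau|^2 = |d\tau/dz|^2|dz|^2 = 16|\Xi_0|^2|dZ/dz|^{-4}|dz|^2$ and $\Im\tau = g/|dZ|^2 = e^{-u}|dz|^2/(|dZ/dz|^{-2}|dz|^2) = e^{-u}|dZ/dz|^2$, the coordinate factors $|dZ/dz|$ cancel and one is left with $\tilde g = 16|\Xi_0|^2 e^{2u}|dz|^2$, i.e. $e^{2v} = 16|\Xi_0|^2 e^{2u}$, which is exactly \eqref{Eq_UVrelation} after taking logarithms ($v = u + \log|\Xi_0| + 2\log 2$). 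One must check this is independent of the special coordinate chosen — but that is automatic since $\tilde g$ is globally defined — and that the formula, a priori valid only near regular points, extends over $\Om\setminus\Xi^{-1}(0)$ by continuity.

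For part~\eqref{It_HyperbMandXi_spK} I would run the computation backwards. Given $\tilde g = e^{2v}|dz|^2$ hyperbolic and $\Xi = \Xi_0\,dz^3$ holomorphic, define $u$ by \eqref{Eq_UVrelation}, i.e. $u := v - \log|\Xi_0| - 2\log 2$, on $\Om\setminus\Xi^{-1}(0)$, and $g := e^{-u}|dz|^2$. The key point is to verify that $u$ solves $\Delta u = 16|\Xi|^2 e^{2u}$: since $v$ has constant curvature $-1$, it satisfies $\Delta v = e^{2v}$ away from singularities, and $\log|\Xi_0|$ is harmonic on $\Om\setminus\Xi^{-1}(0)$ because $\Xi_0$ is holomorphic and nonvanishing there, so $\Delta u = \Delta v - \Delta\log|\Xi_0| = e^{2v} = e^{2v}$; on the other hand $16|\Xi_0|^2 e^{2u} = 16|\Xi_0|^2 e^{2v}(16|\Xi_0|^2)^{-1} = e^{2v}$, so the two sides agree. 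Hence by \autoref{Prop_spKinTermsOfKWXi} this $u$ determines a special Kähler structure on $\Om\setminus\Xi^{-1}(0)$ with associated cubic form precisely $\Xi$ (the proposition guarantees the cubic form equals the $\Xi$ one started with). Uniqueness: any special Kähler structure on $\Om\setminus\Xi^{-1}(0)$ with associated cubic form $\Xi$ is encoded by a solution $u'$ of the same equation; by \autoref{Prop_spKinTermsOfKWXi} the structure is determined by the pair $(u',\Xi)$, and part~\eqref{It_AssocHyperbMetr} forces $u'$ to satisfy \eqref{Eq_UVrelation} with the given $v$, hence $u' = u$. Since $\Om$ is simply connected after removing the $p_j$'s, one also checks the connection reconstructed from $(h,\psi)$ as in \autoref{Prop_spKinTermsOfKWXi} is the unique one compatible with this data.

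The main obstacle I anticipate is bookkeeping rather than conceptual: making sure all the conformal-factor Jacobians $dZ/dz$ cancel correctly in part~\eqref{It_AssocHyperbMetr} (it is easy to land on a wrong power or a wrong numerical constant, and the constant $2\log 2$ depends on the normalization $\Delta u = 16|\Xi|^2 e^{2u}$ versus the factor $8$ in the curvature), and handling the fact that $\tau$ is only locally defined and only a local biholomorphism off $\Xi^{-1}(0)$, so that all identities must first be established on the regular locus and then extended. I would organize the write-up to do the coordinate computation once in a carefully chosen special coordinate, record the resulting global identity \eqref{Eq_UVrelation}, and then deduce both existence and uniqueness in~\eqref{It_HyperbMandXi_spK} formally from it together with \autoref{Prop_spKinTermsOfKWXi}.
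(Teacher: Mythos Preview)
Your approach is essentially identical to the paper's: for \textit{(i)} the paper also uses $g=(\Im\tau)\,|dZ|^2$ together with \eqref{Eq_RelationTauXiZ} to identify $\tau^*g_{\Sieg}$ with $16\,|\Xi_0|^2 e^{2u}|dz|^2$, and for \textit{(ii)} it defines $u:=v-\log|\Xi_0|-2\log 2$, checks $\Delta u = 16|\Xi_0|^2 e^{2u}$ via $\Delta v=e^{2v}$ and harmonicity of $\log|\Xi_0|$, and invokes \autoref{Prop_spKinTermsOfKWXi} exactly as you do. One bookkeeping slip to correct when you write it up: since $|dZ|^2=|dZ/dz|^{2}|dz|^2$, one has $\Im\tau=e^{-u}|dZ/dz|^{-2}$ (you wrote the reciprocal), and with this sign the Jacobian factors do cancel as claimed.
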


\begin{rem}
	We would like to point out that in the statement of~\autoref{Lem_spKandHyp_loc}, the domain $\Om$ is allowed to have no punctures, i.e., $k=0$ is allowed.
\end{rem}

\begin{proof}[Proof of~\autoref{Lem_spKandHyp_loc}.]
	Notice that since $\Xi_0$ is holomorphic, $\log |\Xi_0|$ is harmonic on $\Om\setminus \Xi^{-1}(0)$.
	Since by \autoref{Prop_spKinTermsOfKWXi} the pair $(u,\Xi_0)$ satisfies~\eqref{Eq_SpKaehlerDiscXi},  for  $v: = u +\log |\Xi_0|+2\log 2$ we have
	\[
	\Delta v = \Delta u = 16\, |\Xi_0|^2 e^{2v - 2\log |\Xi_0| -4\log 2}=e^{2v}.
	\]
	Hence, $\tilde g = e^{2v}|dz|^2$ is a metric of constant curvature $-1$ on $\Om\setminus \Xi^{-1}(0)$.
	
	Furthermore, we claim that $\tau^*g_\Sieg = \tilde g$.
	To see this,  notice that if $Z$ is a special holomorphic coordinate (in a neighbourhood of some point), we have
	\[
	g = e^{-u}|dz|^2 = \bigl ( \Im\tau \bigr) |dZ|^2 = \bigl ( \Im\tau \bigr) \left |\partial_zZ \right|^2 |dz|^2 = \bigl ( \Im\tau \bigr)\frac{4\, |\Xi_0|}{|\partial_z\tau|}\, |dz|^2.
	\]
	Here the last equality follows from~\eqref{Eq_RelationTauXiZ}.
	Hence,
	\[
	u=\log  |\partial_z\tau|-\log\bigl ( \Im\tau\bigr)  -\log |\Xi_0|-2\log 2\qquad\Leftrightarrow\qquad v= \log  |\partial_z\tau|-\log\bigl ( \Im\tau\bigr),
	\]
	which yields in turn
	\[
	\tilde g = e^{2v}|dz|^2 = \frac{|\partial_z\tau|^2}{\bigl ( \Im\tau\bigr)^2} |dz|^2 = \tau^*g_{\Sieg_1}.
	\]
	This clearly proves~\textit{\ref{It_AssocHyperbMetr}}.
	
	The last part,~\textit{\ref{It_HyperbMandXi_spK}}, is obtained essentially by reading the above computation backwards.
	That is, if $\tilde g = e^{2v}|dz|^2$ is a metric of constant curvature $-1$, we have $\Delta v = e^{2v}$.
	Using this, it is easy to check that for any holomorhic function $\Xi_0$ the function
	\begin{equation*}
		u:=v-\log |\Xi_0| - 2\log 2
	\end{equation*}
	satisfies~\eqref{Eq_SpKaehlerDiscXi}.
	Appealing to \autoref{Prop_spKinTermsOfKWXi}, we obtain~\textit{\ref{It_HyperbMandXi_spK}}.
\end{proof}

\begin{corollary}
	\label{Thm_SpKHyperbLoc}
	Let $g$ be a special K\"ahler metric on the punctured disc $B_1^*$ such that the associated holomorphic cubic form $\Xi$ has order $n\in\Z$ at the origin.
	Let $\tilde g$ be the associated hyperbolic metric.
	Then the following holds:
	\begin{enumerate}[(i),topsep=3pt,itemsep=-3pt]
		\item
		  \label{It_ConicalCase}
		 $g$ is conical of order $\b/2$ if and only if $\tilde g$ is conical of order $n-\b\in (-1, +\infty)$, i.e.,
		\begin{equation*}
			\label{Eq_ConeImpliesCone}
		g = r^\b \bigl (C + o(1)\bigr )\,|dz|^2\qquad\Longleftrightarrow\qquad \tilde g = r^{2(n-\b)} \bigl (C' + o(1)\bigr )\,|dz|^2;
		\end{equation*}
		\item
			\label{It_LogCase}
		$g$ has a logarithmic singularity if and only if $\tilde g$ has a cusp, i.e.,
		\begin{equation*}
			\pushQED{\qed}
		g= -r^{n+1}\log r  \bigl (C + o(1)\bigr )\, |dz|^2 \qquad\Longleftrightarrow\qquad  \tilde g = \frac {C' + o(1)}{(r\log r)^2} \, |dz|^2.\qedhere
		\popQED
		\end{equation*}
	\end{enumerate}
\end{corollary}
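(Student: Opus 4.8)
The plan is to deduce the whole corollary from the pointwise identity between the two conformal factors furnished by part~\textit{\ref{It_AssocHyperbMetr}} of \autoref{Lem_spKandHyp_loc}. Write $g=e^{-u}|dz|^2$ and $\tilde g=e^{2v}|dz|^2$ and abbreviate the conformal factors by $\phi:=e^{-u}$ and $\tilde\phi:=e^{2v}$. Relation~\eqref{Eq_UVrelation} then reads $\phi=4\,|\Xi_0|\,\tilde\phi^{-1/2}$, i.e.
\[
  \phi^{2}\,\tilde\phi\;=\;16\,|\Xi_0|^2 ,
\]
an identity which, after shrinking $B_1^*$ so that $\Xi_0$ has no zeros there, holds on all of $B_1^*$. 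Since $\ord_0\Xi=n$, write $\Xi_0(z)=z^{n}h(z)$ with $h$ holomorphic and nowhere vanishing; then $|\Xi_0(z)|^2=r^{2n}\bigl(C_0+o(1)\bigr)$ with $C_0=|h(0)|^2>0$ as $r=|z|\to 0$. Given this, the corollary reduces to bookkeeping: substitute the prescribed asymptotics of $\phi$ (resp.\ of $\tilde\phi$) into the identity and read off those of $\tilde\phi$ (resp.\ of $\phi$), keeping track of positive constants only.

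For part~\textit{\ref{It_ConicalCase}}, in the forward direction I insert $\phi=r^{\b}\bigl(C+o(1)\bigr)$ with $C>0$ and solve for $\tilde\phi$:
\[
  \tilde\phi\;=\;16\,|\Xi_0|^2\,\phi^{-2}\;=\;r^{2(n-\b)}\bigl(C'+o(1)\bigr),\qquad C'=16C_0/C^{2}>0 ,
\]
which is the asserted conical profile of $\tilde g$; the converse is the same identity solved for $\phi=4\,|\Xi_0|\,\tilde\phi^{-1/2}$, using $|\Xi_0|=r^{n}\bigl(\sqrt{C_0}+o(1)\bigr)$ and $\tilde\phi^{-1/2}=r^{\b-n}\bigl(1/\sqrt{C'}+o(1)\bigr)$, which returns $\phi=r^{\b}\bigl(C+o(1)\bigr)$ with $C=4\sqrt{C_0/C'}>0$. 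The only step requiring a genuine argument is the inclusion $n-\b>-1$. Here I would observe that, since $u$ solves~\eqref{Eq_SpKaehlerDiscXi}, the function $w:=u+\b\log r$ is bounded near the origin (because $\phi=e^{-u}=r^{\b}(C+o(1))$ with $C>0$) while $\Delta w=16\,|\Xi_0|^2\phi^{-2}\ge 0$; hence $w$ extends to a subharmonic function across the puncture, and in particular $16\,|\Xi_0|^2\phi^{-2}$ is integrable near the origin. But this function equals $r^{2(n-\b)}\bigl(16C_0/C^{2}+o(1)\bigr)$, whose integral against $r\,dr$ diverges as soon as $n-\b\le -1$; so $n-\b>-1$.

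For part~\textit{\ref{It_LogCase}}, in the forward direction I insert $\phi=-r^{n+1}\log r\,\bigl(C+o(1)\bigr)$ with $C>0$ (note $-\log r>0$ near $0$, so this is a positive conformal factor) and obtain
\[
  \tilde\phi\;=\;16\,|\Xi_0|^2\,\phi^{-2}\;=\;\bigl(C'+o(1)\bigr)(r\log r)^{-2},\qquad C'=16C_0/C^{2}>0 ,
\]
i.e.\ $\tilde g$ has a cusp at the origin; for the converse, from $\tilde\phi=\bigl(C'+o(1)\bigr)(r\log r)^{-2}$ one gets $\tilde\phi^{-1/2}=(-r\log r)\bigl(1/\sqrt{C'}+o(1)\bigr)$, whence $\phi=4\,|\Xi_0|\,\tilde\phi^{-1/2}=-r^{n+1}\log r\,\bigl(C+o(1)\bigr)$ with $C=4\sqrt{C_0/C'}>0$, the logarithmic profile.

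This gives the two equivalences at the level of metrics, which is the content of the displayed asymptotics in the statement; that these are equivalent to the corresponding statements about $(g,\nabla)$ --- conical of order $\tfrac12\b$, resp.\ having a logarithmic singularity --- follows because $\om_\nabla$ is determined by $u$ and $\Xi$ through~\eqref{Eq_Conn1Form}--\eqref{Eq_ConnEntriesDmain} and~\eqref{Eq_haViaXi}, so that fixing the asymptotics of $g$ and the order of $\Xi$ forces those of the connection form as well. I expect the one genuinely delicate step to be the exclusion $n-\b>-1$ in part~\textit{\ref{It_ConicalCase}}; everything else is direct manipulation of~\eqref{Eq_UVrelation}.
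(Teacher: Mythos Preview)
Your argument is correct and follows the same route as the paper: the corollary is an immediate consequence of the identity~\eqref{Eq_UVrelation} from \autoref{Lem_spKandHyp_loc}, and the paper accordingly gives no separate proof (the statement carries a \qed). The only point of divergence is the inequality $n-\beta>-1$ in~\textit{\ref{It_ConicalCase}}: the paper just cites \cite{Haydys15_IsolSing_CMP}*{Thm.\,1.1} or, equivalently, invokes the classification of isolated singularities of constant negative curvature metrics, whereas you give a self-contained argument via the removable-singularity theorem for bounded subharmonic functions and integrability of the Riesz measure; both work.
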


The inequality $n-\beta>-1$ claimed in~\textit{\ref{It_ConicalCase}}  has been established in \cite[Thm.\,1.1]{Haydys15_IsolSing_CMP}.
Of course, this also follows from the classifaction of isolated singularities for metrics of constant negative curvature.


\begin{thm}
	\label{Thm_spKviaHyperbXi_general}
	Let $\Sigma$ be a Riemann surface (not necessarily compact) and $\Sigma_0\subset \Sigma$ be an open subset.
	For any holomorphic cubic form $\Xi$  and any smooth hyperbolic metric $\tilde g$  on $\Sigma_0$ there is a unique special K\"ahler structure  $(g,\nabla)$ on $\Sigma_0\setminus\Xi^{-1}(0)$ whose associated hyperbolic metric and associated cubic form are $\tilde g$ and $\Xi$ respectively.
	
	If $\Xi$ is meromorphic on $\Sigma$ with the divisor $(\Xi)=\sum_{p\in\Sigma}\ord_p\Xi\, \cdot p$ and $\tilde g$ represents a divisor $D$, then for the special K\"ahler structure $(g,\nabla)$ on $\Sigma_0:=\Sigma\setminus\bigr (\supp (\Xi)\cup \supp D\bigl )$ as above the following holds:
	\begin{enumerate}[(i),topsep=3pt,itemsep=1pt]
		\item A cusp singularity $p$ of $\tilde g$ is a logarithmic singularity of $(g,\nabla)$ of order $\tfrac 12 (\ord_p\Xi +1)$;
		\item A conical singularity $p$ of $\tilde g$ of order $\alpha$ is a conical singularity of $(g,\nabla)$ of order $\tfrac 12 (\ord_p\Xi -\alpha)$;
		\item	$(g,\nabla)$ has a conical singularity of order $\tfrac 12 \ord_p\Xi$ at a point $p\in\supp (\Xi)\setminus \supp D$.
	\end{enumerate}
\end{thm}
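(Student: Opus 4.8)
The plan is to globalise the local correspondence of \autoref{Lem_spKandHyp_loc} by a gluing argument and then to extract the three singularity statements from the local classification in \autoref{Thm_SpKHyperbLoc}. For existence and uniqueness on $\Sigma_0\setminus\Xi^{-1}(0)$, first I would cover $\Sigma_0$ by simply connected coordinate charts $\Om_i$ and, writing $\tilde g=e^{2v_i}|dz_i|^2$ and $\Xi=\Xi_{0,i}\,dz_i^3$ on each, invoke \autoref{Lem_spKandHyp_loc}\,\textit{\ref{It_HyperbMandXi_spK}} to obtain a special K\"ahler structure $(g_i,\nabla_i)$ on $\Om_i\setminus\Xi^{-1}(0)$ with $g_i=4|\Xi_{0,i}|e^{-v_i}|dz_i|^2$ and associated cubic form $\Xi$. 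Since $\Xi$ transforms as a cubic form and $e^{2v}|dz|^2$ as a metric, a short computation shows that $4|\Xi_0|e^{-v}|dz|^2$ is independent of the holomorphic coordinate, so the $g_i$ patch to a global metric $g$ on $\Sigma_0\setminus\Xi^{-1}(0)$. On an overlap $\Om_i\cap\Om_j$ the structures $(g_i,\nabla_i)$ and $(g_j,\nabla_j)$ share the same underlying metric and the same associated cubic form, hence coincide by the uniqueness clause of \autoref{Lem_spKandHyp_loc}\,\textit{\ref{It_HyperbMandXi_spK}}; thus the $\nabla_i$ glue to a global flat, symplectic, torsion-free connection $\nabla$. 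By \autoref{Lem_spKandHyp_loc}\,\textit{\ref{It_AssocHyperbMetr}} the associated hyperbolic metric of $(g,\nabla)$ is $\tilde g$, and its associated cubic form is $\Xi$ by construction. Uniqueness is then immediate: if $(g',\nabla')$ is another special K\"ahler structure with these associated data, then on each chart \autoref{Lem_spKandHyp_loc}\,\textit{\ref{It_AssocHyperbMetr}} forces $g'=e^{-u_i}|dz_i|^2$ with $u_i$ as in \eqref{Eq_UVrelation}, and \autoref{Lem_spKandHyp_loc}\,\textit{\ref{It_HyperbMandXi_spK}} then forces $\nabla'=\nabla$, so $(g',\nabla')=(g,\nabla)$.

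For the second part I would put $\Sigma_0=\Sigma\setminus\bigl(\supp(\Xi)\cup\supp D\bigr)$; since $\supp(\Xi)$ contains the zero set of $\Xi$, one has $\Sigma_0\setminus\Xi^{-1}(0)=\Sigma_0$, and the first part produces $(g,\nabla)$ on $\Sigma_0$. Then I would fix $p\in\supp(\Xi)\cup\supp D$, choose a holomorphic coordinate $z$ centred at $p$ so that a punctured neighbourhood of $p$ is identified with $B_1^*$, and set $n=\ord_p\Xi$. Since the associated cubic form of $(g,\nabla)$ has order $n$ at $p$ and its associated hyperbolic metric is $\tilde g$, \autoref{Thm_SpKHyperbLoc} applies directly in each of the three cases. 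If $\tilde g$ has a cusp at $p$, part \textit{\ref{It_LogCase}} gives $g=-r^{n+1}\log r\,(C+o(1))|dz|^2$, i.e.\ a logarithmic singularity of order $\tfrac12(n+1)=\tfrac12(\ord_p\Xi+1)$, which is (i). If $\tilde g$ has a conical singularity of order $\alpha$ at $p$ (so $\tilde g\sim r^{2\alpha}(C'+o(1))|dz|^2$, with $\alpha>-1$ since the total angle is positive), part \textit{\ref{It_ConicalCase}} applied with $n-\beta=\alpha$ gives a conical singularity of $(g,\nabla)$ of order $\tfrac12\beta=\tfrac12(\ord_p\Xi-\alpha)$, which is (ii). Finally, if $p\in\supp(\Xi)\setminus\supp D$, then $\tilde g$ is smooth and non-degenerate near $p$, that is $\tilde g\sim r^{0}(C'+o(1))|dz|^2$; part \textit{\ref{It_ConicalCase}} applied with $\alpha=n-\beta=0$ then gives a conical singularity of order $\tfrac12 n=\tfrac12\ord_p\Xi$, which is (iii).

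The substantive work for this theorem is already carried out in the preparatory results \autoref{Lem_spKandHyp_loc} and \autoref{Thm_SpKHyperbLoc}: granted those, existence and uniqueness reduce to a formal gluing whose only non-trivial ingredient is the coordinate-invariance of $4|\Xi_0|e^{-v}|dz|^2$. The one point that needs a moment's care rather than a direct quotation is case (iii), where a smooth hyperbolic metric has to be read as a ``conical singularity of order $0$'' in order to apply \autoref{Thm_SpKHyperbLoc}\,\textit{\ref{It_ConicalCase}} with $\alpha=0$, and one should confirm that the resulting asymptotics $g\sim r^{n}(C+o(1))|dz|^2$ match the local model \eqref{Eq_FlatConicalStr} with exponent $n$.
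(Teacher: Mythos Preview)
Your proposal is correct and follows essentially the same approach as the paper: globalise \autoref{Lem_spKandHyp_loc} by checking coordinate-independence, then read off (i)--(iii) from \autoref{Thm_SpKHyperbLoc}. The only cosmetic difference is that the paper verifies coordinate-independence at the level of the function $u$ and the PDE~\eqref{Eq_SpKaehlerDiscXi} (showing $\hat u = u + 2\log|f'|$ solves the transformed equation), whereas you check directly that $4|\Xi_0|e^{-v}|dz|^2$ is a well-defined metric; these are equivalent computations, and your version is if anything slightly more direct.
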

\begin{proof}
	Pick a point $p\in\Sigma$ and an open set $U$ together with a holomorphic coordinate $z$ centered at $p$.
	If $p\notin \supp (\Xi)\cup \supp D$,  we may think of $U$ as a disc $\{ |z|<1 \}$.
	Otherwise, $U$ can be chosen to be the punctured disc.
	
	By~\autoref{Lem_spKandHyp_loc}, $\tilde g=e^{2v}|dz|^2$ and $\Xi=\Xi_0(z)\, dz^3$ determine a unique special K\"ahler structure $(g,\nabla)$ on $U$, where $g=e^{-u}|dz|^2$ with $u=v-\log |\Xi_0| -2\log 2$.
	Moreover, $u$ satisfies~\eqref{Eq_SpKaehlerDiscXi}.
	Since this construction of $(g,\nabla)$ involves a local coordinate, $(g,\nabla)$ a priori depends on this choice.
	We prove, however, that it is in fact immaterial, i.e., different choices yield equal special K\"ahler structures.
	
	To this end, choose another holomorphic coordinate $\hat z$ on $U$.
	If $\hat z = f(z)$, where $f$ is holomorphic, the local representations $\hat \Xi_0(\hat z)\, d\hat z^3$ and $\Xi_0(z)\, dz^3$ of $\Xi$ are related by $\hat \Xi_0(\hat z) = \Xi_0(z) \bigl ( f'(z) \bigr)^{-3}$.
	Also, for the flat metric $ g_1 =|d\hat z|^2$ and the corresponding Laplacian $ \Delta_1 = \partial^2_{\hat x\hat x} + \partial^2_{\hat y\hat y}$ we have
	\[
	 g_1 = |f'(z)|^2 |dz|^2\qquad\text{and}\qquad \Delta_1 = |f'(z)|^{-2}\Delta.
	\]
	
	Multiply~\eqref{Eq_SpKaehlerDiscXi} by $|f'(z)|^{-2}$ to obtain
	\begin{equation*}
		\Delta_1 u = |f'(z)|^{-2}\, |\Xi_0(z)|_0^2\; e^{2u} =
		|f'(z)|^4\, |\hat\Xi_0(\hat z)|^2_0\; e^{2u},
	\end{equation*}
	where the subscript ``$0$'' indicates the norm induced by $|dz|^2$.
	Furthermore, for $\hat u:=u + 2\log |f'(z)|$ we compute
	\[
	\Delta_1\, \hat u = \Delta_1\, u  = |\hat \Xi_0|^2_{1}\; e^{2\hat u}.
	\]
	Hence, for the unique special K\"ahler structure $(\hat g, \hat \nabla)$ determined by $(\hat u, \Xi)$ in the coordinate $\hat z$ as in~\autoref{Prop_spKinTermsOfKWXi}, we have
	\[
	\hat g = e^{-\hat u}|d\hat z|^2 = e^{-u} |dz|^2 = g.
	\]
	 Since a special K\"ahler metric and the associated cubic form determine the flat symplectic connection uniquely, we conclude that $(g, \nabla)$ and $(\hat g, \hat \nabla )$ coincide (more precisely, this means $(g,\nabla) = f^*(\hat g,\hat \nabla)$).
	 By the construction,  $(\hat g,\hat \nabla)$ is the special K\"ahler structure determined by $\Xi$ and the hyperbolic metric
	 \[
	 \exp\bigl (2\hat u +2\log |\hat \Xi_0(\hat z)|)\;\bigr|d\hat z|^2 = e^{2v} |dz|^2 = \tilde g,
	 \]
	 where the above equality follows from the definition of $\hat u$.
	  Thus, the choice of the local coordinate used in~\autoref{Prop_spKinTermsOfKWXi} is immaterial as claimed.
	 This proves the existence of a special K\"ahler structure for given $\Xi$ and $\tilde g$.
	
	 The uniqueness of the special K\"ahler structure corresponding to $(\tilde g,\Xi)$ follows immediately from the corresponding local statement.
	 The other properties claimed follow directly from~\autoref{Thm_SpKHyperbLoc}.
\end{proof}

\begin{example}[A special K\"ahler structure whose associated cubic form has an essential singularity]
	\label{Ex_EssSing}
	Let $\Xi (z):=e^{1/z}dz^3$ be a cubic holomorphic form on $\C^*$.
	$\Xi$ may be thought of as a holomorphic cubic form on $\P^1$ with two singularities: one essential and the other one of degree $-6$.
	Pick a hyperbolic metric singular at any $3$ points $w_1,w_2,w_3\in \P^1$.
	By \autoref{Thm_spKviaHyperbXi_general} we obtain a special K\"ahler structure on $\P^1$ with at least three and at most five singularities depending on the number of points in $\{ w_1, w_2, w_3 \}\cap \{ 0,\infty \}$ such that $\Xi$ is the associated cubic form.
	To the best of our knowledge, this is the first example of a special K\"ahler structure whose associated cubic form has essential singularities.
\end{example}

Let $(g,\nabla)$ be a special K\"ahler structure on the punctured disc whose associated cubic form $\Xi=\Xi_0\, dz^3$ has an essential singularity at the origin.
Assume for simplicity of exposition that the origin is a regular point for the associated hyperbolic metric $\tilde g = e^{2v}|dz|^2$.
The existence of such structures follows by  \autoref{Thm_spKviaHyperbXi_general} just like in the example above.
By~\eqref{Eq_UVrelation} we have
\begin{equation*}
	g = e^{-u}|dz|^2 = 4e^{-2v} |\Xi_0|\, |dz|^2.
\end{equation*}
Notice that $e^{-2v}$ has a positive limit at the origin, whereas by the great Picard Theorem $|\Xi_0|$ takes any positive value near the origin.
Hence, in this case the behavior of $g$ is highly irregular near the origin.

\section{Special K\"ahler metrics versus special K\"ahler structures}
\label{Sect_spKStrVsMetrics}

\autoref{Thm_spKviaHyperbXi_general} allows us to construct  inequivalent special K\"ahler structures such that the corresponding Riemannian metrics are equal.
Indeed, fix a pair $(\tilde g,\Xi)$ as in \autoref{Thm_spKviaHyperbXi_general} and let $g$ be the corresponding special K\"ahler metric.
It is then clear from~\eqref{Eq_UVrelation}  that the pair $(\tilde g, \l\, \Xi)$ leads to the metric $|\l|\cdot g$,  where  $\l\in \C^*$.
Hence, specializing to $|\l|=1$ we obtain a family of special K\"ahler structures parameterized  by $S^1$ such that all corresponding Riemannian metrics are equal.

\begin{example}
	Fix arbitrarily a hyperbolic metric $\tilde g$ on the punctured unit disc $B_1^*$.
	Choose a holomorphic cubic differential $\Xi$ on $B_1^*$ such that $\Xi$ is of order $-3$ at the origin.
	Observe that the leading coefficient $\xi_{-3}$ in the expansion $\Xi_0 (z)= \xi_{-3}\,{z^{-3}}+ {\xi_{-2}}\,{z^{-2}}+\dots$
	is independent of the choice of a local coordinate.
	Hence, the family $\{\, \l\,\Xi\, \mid |\l| =1\}$ consists of holomorphic cubic differentials that are pairwise inequivalent even up to a change of coordinates.
	Hence, for the corresponding family of special K\"ahler structures $(g, \nabla_\l)$  the metric is independent of $\l$ and the corresponding structures are pairwise inequivalent.
\end{example}

\begin{proposition}
	\label{Prop_spKwithFixedMetric}
	Let $\Sigma$ be a Riemann surface.
	\begin{enumerate}[(i), itemsep=1pt, topsep=-2pt]
		\item\label{It_AssocCubicFormsWithAFixedMetric}
		Let $(g,\nabla)$ and $(\hat g, \hat\nabla)$ be two special K\"ahler structures  on $\Sigma$, whose associated cubic forms are  $\Xi$ and $\hat \Xi$ respectively. If $g=\hat g$, then
		\begin{equation}
		\label{Eq_XiHatXi}
		\hat \Xi = \l\cdot \Xi,
		\end{equation}
		where $\l\in\C$ is of absolute value $1$;
		\item \label{It_spKFamilyWithfixedMetric}
		If $(g,\nabla)$ is a special K\"ahler structure on $\Sigma$ whose associated cubic form is $\Xi$, then for each $\l\in S^1$ there is a unique special K\"ahler structure $(g, \nabla_\l)$, whose associated cubic form is $\l\,\Xi$.
	\end{enumerate}	
\end{proposition}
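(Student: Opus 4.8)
The plan is to reduce both parts to the local description of special K\"ahler structures in \autoref{Prop_spKinTermsOfKWXi}, combined with the fact -- already exploited in the proof of \autoref{Thm_spKviaHyperbXi_general} -- that a special K\"ahler metric together with its associated cubic form determines the flat symplectic connection uniquely.

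For~\textit{\ref{It_AssocCubicFormsWithAFixedMetric}} I would work in an arbitrary local holomorphic coordinate $z$, writing $g=\hat g=e^{-u}|dz|^2$, $\Xi=\Xi_0\,dz^3$ and $\hat\Xi=\hat\Xi_0\,dz^3$. By \autoref{Prop_spKinTermsOfKWXi} the pairs $(u,\Xi_0)$ and $(u,\hat\Xi_0)$ both satisfy \eqref{Eq_SpKaehlerDiscXi}, so $16|\Xi_0|^2e^{2u}=\Delta u=16|\hat\Xi_0|^2e^{2u}$ and hence $|\Xi_0|=|\hat\Xi_0|$ pointwise; equivalently, since the Gaussian curvature of $g$ equals $8|\Xi|_g^2$, equality of the metrics forces $|\Xi|_g=|\hat\Xi|_g$. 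Therefore the quotient $q:=\hat\Xi/\Xi$ is a globally defined meromorphic function on $\Sigma$ (here I use $\Xi\not\equiv 0$) whose modulus is identically $1$ away from its discrete zero/pole set. Since the complement of a discrete set in a connected Riemann surface is connected and a holomorphic function of constant modulus on a connected open set is constant by the open mapping theorem, $q$ equals a constant $\lambda$ with $|\lambda|=1$, and hence $\hat\Xi=\lambda\,\Xi$ on all of $\Sigma$, which is \eqref{Eq_XiHatXi}.

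For~\textit{\ref{It_spKFamilyWithfixedMetric}} fix $\lambda\in S^1$. The key point is that $|\lambda\Xi_0|=|\Xi_0|$, so in any holomorphic coordinate the pair $(u,\lambda\Xi_0)$ again satisfies \eqref{Eq_SpKaehlerDiscXi}; by \autoref{Prop_spKinTermsOfKWXi} it therefore determines, on each coordinate patch, a special K\"ahler structure with the same metric $g=e^{-u}|dz|^2$ and with associated cubic form $\lambda\Xi$. Covering $\Sigma$ by such patches, on overlaps these local structures share the metric $g$ and the globally defined cubic form $\lambda\Xi$, so by the uniqueness of the flat symplectic connection determined by $(g,\lambda\Xi)$ the local connections agree and patch to a global flat symplectic connection $\nabla_\lambda$ on $\Sigma$; the same uniqueness shows that $(g,\nabla_\lambda)$ is the only special K\"ahler structure with metric $g$ and associated cubic form $\lambda\Xi$. (Alternatively, one may simply feed the pair $(\tilde g,\lambda\Xi)$ -- with $\tilde g$ the associated hyperbolic metric of $(g,\nabla)$ -- into \autoref{Thm_spKviaHyperbXi_general}; since \eqref{Eq_UVrelation} involves $\Xi$ only through $|\Xi_0|$, the special K\"ahler metric it produces is again $g$.)

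I expect the only genuinely non-formal step to be the global argument in~\textit{\ref{It_AssocCubicFormsWithAFixedMetric}}: passing from the pointwise identity $|\Xi|=|\hat\Xi|$ to a single global unimodular constant, which rests on connectedness of $\Sigma$ and the open mapping theorem. The patching in~\textit{\ref{It_spKFamilyWithfixedMetric}} is immediate once one invokes that a metric together with its associated cubic form pins down $\nabla$, and the $\Delta u$ comparisons are routine.
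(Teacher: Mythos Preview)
Your proposal is correct and follows essentially the same approach as the paper. For~\textit{\ref{It_AssocCubicFormsWithAFixedMetric}} the paper argues exactly as you do---comparing the two instances of~\eqref{Eq_SpKaehlerDiscXi} to get $|\Xi_0|=|\hat\Xi_0|$ and then invoking that two holomorphic functions of equal modulus differ by a unimodular constant---only it phrases the globalisation as ``it suffices to check~\eqref{Eq_XiHatXi} near a regular point'' (the identity theorem) rather than via your quotient $q=\hat\Xi/\Xi$; for~\textit{\ref{It_spKFamilyWithfixedMetric}} the paper takes precisely your parenthetical route through \autoref{Thm_spKviaHyperbXi_general}, while your primary patching argument via \autoref{Prop_spKinTermsOfKWXi} is an equally valid and slightly more self-contained alternative.
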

\begin{proof}
	Clearly, to prove \textit{\ref{It_AssocCubicFormsWithAFixedMetric}} it is enough to check~\eqref{Eq_XiHatXi} in a neighborhood of a regular point $p\in\Sigma$.
	Thus, let $z$ be a local holomorphic coordinate in a neighborhood $U$ of $p$.
	
	If $(g,\nabla)$ and $( \hat g, \hat\nabla)$ are two special K\"ahler structures with the associated cubic forms $\Xi = \Xi_0\, dz^3$ and  $\hat\Xi = \hat \Xi_0\, dz^3$ respectively such that $g=\hat g$,  then~\eqref{Eq_SpKaehlerDiscXi} implies $|\Xi_0| = |\hat \Xi_0|$.
	Since both $\Xi_0$ and $\hat\Xi_0$ are holomorphic on $U$, there is $\l\in S^1\subset  \C, $  such that $\Xi_0 = \l\cdot\hat \Xi_0$.
	This proves~\textit{\ref{It_AssocCubicFormsWithAFixedMetric}}.
	
	Claim \textit{\ref{It_spKFamilyWithfixedMetric}} follows from \autoref{Thm_spKviaHyperbXi_general} by setting $\tilde g$ to be the associated hyperbolic metric of $(g,\nabla)$.
\end{proof}

\section{A necessary and sufficient condition for the existence of special K\"ahler structures on compact Riemann surfaces}
\label{Sect_NecessSuffCond}

Just like in the introduction, pick integers $k\ge 1, \ \ell\in [0,k]$, a $k$--tuple $\fp =(p_1,\ldots, p_k)$ of pairwise distinct points on $\Sigma$ as well as a   $k$--tuple $\fb = (\b_1,\ldots, \b_k)$ of real numbers, where $\beta_{\ell+1},\cdots, \beta_k$ are integers.
Denote
\[
\begin{gathered}
D=D(\fp, \fb):=-\sum_{j=1}^\ell [\b_j] p_j - \sum_{j=\ell+1}^k (\b_j -1)p_j, \qquad L=L(\fp, \fb):=\cO \bigl ( K_\Sigma^3 + D\bigr ), \qand\\
H(\fp,\fb): =\Bigl\{ \Xi\in \rH^0 (L)\mid \Xi\neq 0,\  \ord_{p_j}\Xi = \b_j -1\ \text{for } \ell+1\le j\le k\Bigr \},
\end{gathered}
\]
where $K_\Sigma$ is the canonical bundle of $\Sigma$.
Recall also that $[\beta]$ denotes the greatest integer not exceeding $\beta$.
In other words, $H(\fp,\fb)$ consists of all non-trivial meromorphic cubic differentials $\Xi$ which are holomorphic  on  $\Sigma\setminus \{p_1,\cdots, p_k\}$ and satisfy
\begin{equation}
\label{Eq_OrdersOfXi}
\ord_{p_j}\Xi \ge [\b_j]\ \text{ if }j\le\ell\qquad\text{and}\qquad  \ord_{p_j}\Xi = \b_j -1\ \text{ if }j >\ell.
\end{equation}

For each $j>\ell$ choose a local holomorphic coordinate $z_j$ centered at $p_j$ and consider the holomorphic map
\begin{equation*}
	f_j\colon \rH^0(L)\to \C\quad\text{defined by}\quad
	f_j(\Xi) = a_j\  \Leftrightarrow\  \Xi = \Bigl ( a_jz_j^{\beta_j-1} +\dots \Bigr) \, dz_j^3,
\end{equation*}
where dots denote the higher order terms.
Then $H(\fp,\fb)$ is the subset of $\rH^0(L)$ where each $f_j$ does not vanish.
Hence, $H(\fp,\fb)$ is Zarisky open.

\begin{thm}
	\label{NeceSuffCondition_spK}	
	Let $\Sigma$ be a compact Riemann surface of genus $\gamma$.
	Then $\cM_k^\ell (\fp, \fb)\neq\varnothing$ if and only if the following two conditions hold{\rm :}
	\begin{enumerate}[(i),topsep=3pt,itemsep=1pt]
		\item
		\label{It_NCfromGB}
		$4 (\gamma -1 )<\b_1+\cdots+\b_k$;
		\item
		\label{It_OrdersOfXi}
		 $H(\fp,\fb)\neq\varnothing$.
	\end{enumerate}
Moreover, the map that assigns to a special K\"ahler structure $(g,\nabla)$ as above its associated cubic form $\Xi\in H(\fp,\fb)$ is a bijection.
\end{thm}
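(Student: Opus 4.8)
The plan is to use \autoref{Thm_spKviaHyperbXi_general} and \autoref{Thm_SpKHyperbLoc} to trade special K\"ahler structures in $\cM_k^\ell(\fp,\fb)$ for pairs (hyperbolic metric, cubic differential), and then to read off both conditions from Gauss--Bonnet applied to the hyperbolic factor, invoking the Troyanov--McOwen existence and uniqueness theory for hyperbolic metrics with prescribed conical and cusp singularities. The bookkeeping is organised around the divisor
\[
D'(\Xi)\ :=\ (\Xi)-\sum_{j=1}^k\b_j\, p_j
\]
attached to a meromorphic cubic differential $\Xi$ on $\Sigma$. Using $\ord_{p_j}\Xi=\b_j-1$ for $j>\ell$ and $\ord_{p_j}\Xi\ge[\b_j]$ for $j\le\ell$, one checks immediately that every coefficient of $D'(\Xi)$ exceeds $-1$, with value exactly $-1$ precisely at $p_{\ell+1},\dots,p_k$; since $\deg(\Xi)=\deg K_\Sigma^3=6\gamma-6$ we also get $\deg D'(\Xi)=(6\gamma-6)-\sum_j\b_j$, whence
\[
\chi(\Sigma)+\deg D'(\Xi)\ =\ 4(\gamma-1)-\sum_{j=1}^k\b_j .
\]
In particular the Gauss--Bonnet inequality ``$\chi(\Sigma)+\deg D'(\Xi)<0$'', which governs the existence of a hyperbolic metric representing $D'(\Xi)$, is \emph{exactly} condition \ref{It_NCfromGB}.

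For necessity, I would start from a special K\"ahler structure $(g,\nabla)$ with $[(g,\nabla)]\in\cM_k^\ell(\fp,\fb)$, with associated cubic form $\Xi$ and associated hyperbolic metric $\tilde g$. Since $\Xi$ is meromorphic and holomorphic off $\{p_1,\dots,p_k\}$, its divisor has degree $6\gamma-6$. Applying \autoref{Thm_SpKHyperbLoc} at each $p_j$, and its conical case with $\b=0$ at each zero of $\Xi$ in $\Sigma\setminus\{p_1,\dots,p_k\}$ (by \autoref{Prop_DegenLocusAssHypMetric} these are exactly the points where $\tilde g$ degenerates away from the $p_j$), one reads off simultaneously that $\ord_{p_j}\Xi=\b_j-1$ for $j>\ell$ and $\ord_{p_j}\Xi\ge[\b_j]$ for $j\le\ell$ --- so $\Xi\in H(\fp,\fb)$, which is \ref{It_OrdersOfXi} --- and that $\tilde g$ is a hyperbolic metric on $\Sigma$ representing precisely $D'(\Xi)$. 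As $\tilde g$ has finite positive area (integrability at the cusps and cone points is immediate from the local models), Gauss--Bonnet forces $\chi(\Sigma)+\deg D'(\Xi)<0$, which by the identity above is \ref{It_NCfromGB}.

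For sufficiency, together with the bijection, I would assume \ref{It_NCfromGB} and \ref{It_OrdersOfXi} and fix $\Xi\in H(\fp,\fb)$. By the identity above \ref{It_NCfromGB} is the Gauss--Bonnet inequality for $D'(\Xi)$, so the Troyanov--McOwen theorem yields a \emph{unique} hyperbolic metric $\tilde g$ on $\Sigma$ representing $D'(\Xi)$; feeding $(\tilde g,\Xi)$ into \autoref{Thm_spKviaHyperbXi_general} produces a special K\"ahler structure $(g,\nabla)$ on $\Sigma\setminus(\supp\Xi\cup\supp D'(\Xi))$. At a zero $q$ of $\Xi$ outside $\{p_1,\dots,p_k\}$ the metric $\tilde g$ has a cone of order $\ord_q\Xi$, so \autoref{Thm_spKviaHyperbXi_general} (applied to this conical singularity of $\tilde g$) makes $(g,\nabla)$ conical of order $0$ at $q$; as the conformal factor of $g$ equals $4|\Xi_0|e^{-v}$ by \eqref{Eq_UVrelation}, it stays bounded at $q$, the singularity is removable, and $(g,\nabla)$ extends to a special K\"ahler structure on all of $\Sigma\setminus\{p_1,\dots,p_k\}$. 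The remaining clauses of \autoref{Thm_spKviaHyperbXi_general} then give $\ord_{p_j}(g,\nabla)=\tfrac12\b_j$ of the prescribed conic (resp.\ logarithmic) type, so $[(g,\nabla)]\in\cM_k^\ell(\fp,\fb)$; hence $\cM_k^\ell(\fp,\fb)\ne\varnothing$ and $(g,\nabla)\mapsto\Xi$ is onto $H(\fp,\fb)$. For injectivity: if two such structures share the same associated cubic form $\Xi$, the local analysis of the necessity step forces their associated hyperbolic metrics to represent the same divisor $D'(\Xi)$, hence to coincide by the uniqueness part of the Troyanov--McOwen theorem, and then the two structures coincide by the uniqueness clause of \autoref{Thm_spKviaHyperbXi_general}.

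I expect the only genuinely non-formal ingredient to be the existence and uniqueness of $\tilde g$: since the cone angles of $D'(\Xi)$ at the zeros of $\Xi$ are multiples of $2\pi$ not smaller than $4\pi$, one needs the prescribed-singularity theory for constant curvature $-1$ in the regime of large cone angles, where the sole obstruction is Gauss--Bonnet --- precisely because the curvature is negative (for $\Sigma=\P^1$ one must also check that the cited results apply, which is guaranteed since $H(\fp,\fb)\ne\varnothing$ forces $\Xi$ to have enough zeros). Everything else --- identifying $D'(\Xi)$, matching its Gauss--Bonnet quantity with \ref{It_NCfromGB}, and the removable-singularity argument at the zeros of $\Xi$ --- is routine given \autoref{Thm_spKviaHyperbXi_general}, \autoref{Thm_SpKHyperbLoc} and \autoref{Prop_DegenLocusAssHypMetric}.
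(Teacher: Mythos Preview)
Your proposal is correct and follows essentially the same route as the paper: translate via \autoref{Thm_SpKHyperbLoc} and \autoref{Thm_spKviaHyperbXi_general} to a pair $(\tilde g,\Xi)$, apply Gauss--Bonnet to $\tilde g$ for necessity, and invoke the existence/uniqueness of hyperbolic metrics with prescribed cone and cusp singularities for sufficiency (the paper cites Heins rather than Troyanov--McOwen, but the content is the same). Your write-up is in fact more explicit than the paper's on two points the paper leaves implicit: the removable-singularity argument at the zeros of $\Xi$ outside $\{p_1,\dots,p_k\}$, and the injectivity half of the bijection via uniqueness of the hyperbolic representative of $D'(\Xi)$.
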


\begin{proof}
	If $\cM_k^\ell (\fp, \fb)\neq\varnothing$, then by the quantitative relationship between the special K\" ahler metric and the associated cubic form
in Corollary \ref{Thm_SpKHyperbLoc}, the associated cubic form $\Xi$ of any special K\"ahler structure $(g, \nabla)$ such that $[g,\nabla]\in \cM_k^\ell (\fp, \fb)$ lies in $H(\fp,\fb)$, hence \textit{\ref{It_OrdersOfXi}} holds.

	Furthermore, since $\ord_{p_j}\Xi-\b_j\ge -1$, the associated hyperbolic metric $\tilde g$ has either a conical singularity with positive angle or a cusp at each $p_j$.
	Let $p_{k+1},\dots, p_m$ be further points on $\Sigma$ such that $\tilde g$ is singular.
	By 	\autoref{Thm_SpKHyperbLoc}, \textit{\ref{It_ConicalCase}}  each $p_j$ with $j\ge k+1$ is conical singularity of $\tilde g$ of order $\ord_{p_j}\Xi >0$.
	Hence, by the Gau\ss--Bonnet theorem applied to $\tilde g$ we have
	\[
	\sum_{j=1}^k \bigl (\ord_{p_j}\Xi -\b_j\bigl ) + \sum_{j=k+1}^m\ord_{p_j}\Xi + 2-2\gamma <0.
	\]
	Hence, we obtain
	\begin{equation*}
		\sum_{j=1}^k \b_j > \sum_{j=1}^m \ord_{p_j}\Xi + 2-2\gamma = 4(\gamma -1).
	\end{equation*}
	
	It remains to show that \textit{\ref{It_NCfromGB}} and \textit{\ref{It_OrdersOfXi}} yield a special K\"ahler structure.
	Indeed, notice that \textit{\ref{It_NCfromGB}} and \textit{\ref{It_OrdersOfXi}}  imply
	\[\sum_{j=1}^k\, \Big({\rm ord}_{p_j}\,\Xi-\b_j\Big)+ \sum_{\substack{\Xi(Q)=0\\ Q\notin \{p_1,\cdots,p_k\}}} \ord_Q\,\Xi+2-2\gamma<0.\]
	Hence, by \cite{Heins62_OnClassOfConfMetrics} there exists a hyperbolic metric $\tilde g$ which has conical singularities at each zero $q$ of $\Xi$ of order $\ord_q\Xi$, and has either a conical singularity or a cusp at each $p_j$ for all $1\leq j\leq k$.
	The proof is finished by appealing to \autoref{Thm_spKviaHyperbXi_general}.
\end{proof}

%
%

Denote by $\pi\colon \cM_k^\ell (\fp, \fb) \to \cR_k^\ell (\fp, \fb)$ the natural projection, which has been studied in Section~\ref{Sect_spKStrVsMetrics}.
In particular, each fiber of $\pi$ is isomorphic to the circle.

We have the commutative diagram
\begin{equation}
\label{Diagr_Moduli}
\begin{CD}
  \cM_k^\ell (\fp, \fb)@>\Xi>>H(\fp,\fb)/\R_{>0}\\
@V\pi VV @VV V\\
\cR_k^\ell (\fp, \fb) @>\xi >> H(\fp,\fb)/\C^*,
\end{CD}
\end{equation}
where slightly abusing notations $\Xi$ stays for the map, which assigns to a special K\"ahler structure its associated cubic form, and $\xi$ is just the induced map.
By \autoref{Prop_spKwithFixedMetric} and  \autoref{NeceSuffCondition_spK} both $\Xi$ and $\xi$  are bijections.
This can be used to define topologies on   $\cM_k^\ell (\fp, \fb)$ and $\cR_k^\ell (\fp, \fb)$.
Indeed, $H(\fp,\fb)$ is naturally a subset of a vector space $\rH^0 ( L)$, which can be equipped with a topology by introducing a Hermitian inner product (notice that the origin is not contained in $H(\fp,\fb)$).

%
\begin{proof}[\textbf{Proof of \autoref{Cor_NecessCondition_spK}}]
The proof consists of the following parts.
\begin{enumerate}
\item
The first inequality of ~\eqref{Eq_NecessCondMixedSing}  coincides with Theorem
\ref{NeceSuffCondition_spK} \textit{\ref{It_NCfromGB}}.
The second one of \eqref{Eq_NecessCondMixedSing} follows by combining the following facts:
 $\deg\,K_\Sigma^3=6\gamma-6$, $\Xi$ is holomorphic outside $\{p_j:1\leq j\leq k\}$, and \eqref{Eq_OrdersOfXi}.

 \item By \autoref{NeceSuffCondition_spK}, $\cM_{k}^\ell (\fp, \fb)$ is homeomorphic to an open subset of the unit sphere in the complex vector space $\rH^0(L)$. Hence, $\dim\cM_{k}^\ell (\fp, \fb)$ is odd if $\cM_{k}^\ell (\fp, \fb)\neq \varnothing$.
 Likely, $\cR_{k}^\ell (\fp, \fb)$ is homeomorphic to a Zariski open subset of $\P \rH^0(L)$.
 Moreover, if $\ell =k$, then $H(\fp, \fb) = \rH^0(L)\setminus \{ 0 \}$.
 Hence, if $\rH^0(L)$ is non-trivial,  $\cM_{k}^\ell (\fp, \fb)$ and  $\cR_{k}^\ell (\fp, \fb)$ can be identified with $S(\rH^0(L))$ and $\P \rH^0(L)$ respectively.

\item
We prove that
the space $\cM_k^\ell (\fp, \fb)$ is non-empty for $\Sigma = \P^1$ provided~\eqref{Eq_NecessCondMixedSing} holds.
Indeed, given a ${\Bbb Z}$-divisor $D=m_1[z_1]+\cdots+m_n\,[z_n]$ ($z_1,\cdots, z_n\in {\Bbb C}\subset \P^1$) of degree $-6$, the meromorphic cubic differentials whose associated divisor coincides with $D$ have the form
${\rm Const.}\,(z-z_1)^{m_1}\cdots (z-z_n)^{m_n}\,dz^3$. If $z_1=\infty\in\P^1$, then the cubic differentials equals ${\rm Const.}\,(z-z_2)^{m_2}\cdots (z-z_n)^{m_n}\,dz^3$.
Hence, given a $k$-tuple of points on $\Sigma = \P^1$ as well as a $k$-tuple $\fb=(\b_1, \dots, \b_k)$ of real numbers such that~\eqref{Eq_NecessCondMixedSing} holds, there exists a meromorphic cubic differential $\Xi$ on $\P^1$ such that~\eqref{Eq_OrdersOfXi} holds and $\Xi$ is holomorphic on $\P^1\setminus\{ p_1,\dots, p_k \}$. It follows from Theorem \ref{NeceSuffCondition_spK} that the space $\cM_k^\ell (\fp, \fb)$ is non-empty.
	
\item Finally, by Riemann-Roch,  in the case $\Sigma = \P^1$ the complex dimension of ${\rm H}^0(L)= {\rm H}^0(K_\Sigma^3+D)$ is $N+1$, where $N$ is given by~\eqref{Eq_DimH0L-1}.
\end{enumerate}
\end{proof}

\begin{remark}
	It is clear from the proof of \autoref{Cor_NecessCondition_spK} that the case $\Sigma=\P^1$ is somewhat special due to the fact that it is easy to describe when $\rH^0(L)$ is non-trivial.
	In general, the non-triviality of $\rH^0(L)$ depends on the complex structure on $\Sigma$  and the fixed singularities $(\fp,\fb)$ of the special K\"ahler structures under consideration.
\end{remark}

\begin{corollary}
	\label{cor_2Sing}
	Let $(g,\nabla)$ be a special K\"ahler structure on $\P^1$ such that the associated cubic form $\Xi$ is non-trivial and meromorphic.
	Then $(g, \nabla)$ must have at least three singularities.
\end{corollary}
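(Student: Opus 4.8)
The plan is to argue by contradiction: assuming $(g,\nabla)$ has at most two singular points, I will contradict the necessary conditions recorded in \autoref{Cor_NecessCondition_spK}. First I would dispose of the trivial case of no singularities: if $(g,\nabla)$ were smooth on all of $\P^1$, then its associated cubic form $\Xi$ would be a global holomorphic section of $K_{\P^1}^3$, and since $\deg K_{\P^1}^3=-6<0$ this forces $\Xi\equiv 0$, against the hypothesis. Hence the singular set is a non-empty finite set $\{p_1,\dots,p_k\}$ (finite because the singularities are isolated and $\P^1$ is compact), with $k\ge 1$.

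Next I would use that $\Xi$ is assumed meromorphic to conclude that each $p_j$ is either a conical or a logarithmic singularity of $(g,\nabla)$: these are precisely the local models of isolated singularities of a one-dimensional special K\"ahler structure with meromorphic cubic form, as recalled in \autoref{Sect_Preliminaries} (see \eqref{Eq_ModelLogSing} and \eqref{Eq_FlatConicalStr}); in particular no $p_j$ is an essential singularity as in Example~\ref{Ex_EssSing}. After reordering, let the first $\ell$ points be conical and the remaining ones logarithmic, put $\tfrac12\beta_j:=\ord_{p_j}(g,\nabla)$ (so $\beta_j\in\Z$ for $j>\ell$), $\fp=(p_1,\dots,p_k)$, $\fb=(\beta_1,\dots,\beta_k)$. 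Then $[g,\nabla]\in\cM_k^\ell(\fp,\fb)$, so this moduli space is non-empty and the inequalities \eqref{Eq_NecessCondMixedSing} hold with $\gamma=0$.

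Finally I would simply combine the two inequalities of \eqref{Eq_NecessCondMixedSing}. Using $\beta_j\le[\beta_j]+1$ for $1\le j\le\ell$ and then the second inequality of \eqref{Eq_NecessCondMixedSing},
\[
-4<\beta_1+\cdots+\beta_k\ \le\ \ell+\bigl([\beta_1]+\cdots+[\beta_\ell]+\beta_{\ell+1}+\cdots+\beta_k\bigr)\ \le\ \ell+(-6+k-\ell)=k-6,
\]
whence $k>2$, i.e.\ $k\ge 3$, contradicting $k\le 2$. (Equivalently, one can run the short casework $k=1$ and $k=2$ directly against \eqref{Eq_NecessCondMixedSing}, but the uniform estimate above is cleaner.)

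I do not expect any genuine obstacle here: the statement is essentially a corollary of \autoref{Cor_NecessCondition_spK}. The only point that deserves a line of care is the reduction step — namely that meromorphicity of $\Xi$ rules out essential singularities, so that every singular point is of conical or logarithmic type and $(g,\nabla)$ defines a point of some $\cM_k^\ell(\fp,\fb)$ — after which the conclusion is a one-line manipulation of the constraints already in hand.
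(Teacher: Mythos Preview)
Your proof is correct and follows essentially the same route as the paper: both combine the trivial inequality $\sum_j\beta_j\le \ell+\sum_j[\beta_j]$ with the two constraints of \eqref{Eq_NecessCondMixedSing} at $\gamma=0$ to obtain $-4<k-6$. Your additional care in reducing to the setting of $\cM_k^\ell(\fp,\fb)$ (ruling out $k=0$ and invoking the conical/logarithmic dichotomy for meromorphic $\Xi$) is a sensible bit of setup that the paper leaves implicit.
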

\begin{proof}

Combining \eqref{Eq_NecessCondMixedSing} and the trivial inequality
$\sum_{j=1}^k\, \beta_j\leq \ell+\sum_{j=1}^k\,[\beta_j]$, we obtain
\[4(\gamma-1)<\sum_{j=1}^n\,\beta_j\leq 6(\gamma-1)+k.\]
Since  $\gamma=0$ for $\P^1$, we conclude $-4<-6+k$, i.e. $k\geq 3$.
\end{proof}

\begin{remark}
	For any $n\in \Z$ the metric $g = r^n |dz|^2$ is flat on $\C\setminus\{ 0 \}$, hence, can be thought of as a special K\"ahler structure on $\P^1$ singular at most at 2 points, namely $0$ and $\infty$.
	Notice that the corresponding cubic form is trivial, hence this example does not contradict \autoref{cor_2Sing}.
\end{remark}

\begin{example}
	Let $\cR_{24}^0$ denote the moduli space of all special K\"ahler metrics with  $24$ singular points all of logarithmic type.
	$\cR_{24}^0$ fibers over $\Sym^{24}(\P^1)\setminus \{\rm Diagonal\ subset\}$, where each fiber is homeomorphic to a Zariski open subset of $\CP^{18}$.
	Hence, $\cM_{24}^0$ has complex dimension $42$.
	If we also mod out by the natural action of $\PGL(2,\C)$, the resulting space is of complex dimension $39$.
	This space is of interest for elliptic K3 surfaces~\cite{GrossWilson00_LargeCxStrLimits}.
\end{example}

In what follows below we would like to describe which metrics actually appear as associated hyperbolic metrics of some special K\"ahler structure from   $\cM_k^\ell (\fp, \fb)$.
Thus, let $\cR_{\mathrm{hyp}}$ be the set of all hyperbolic metrics on $\Sigma$ with isolated singularities.
We have a natural map
\begin{equation*}
	T\colon \cM_k^\ell (\fp, \fb)\to \cR_{\mathrm{hyp}},\qquad T(g,\nabla)= \tilde g,
\end{equation*}
where $\tilde g$ is the hyperbolic metric associated with $(g,\nabla)$.

\begin{proposition}
		For any compact Riemann surface the image of $T$ is isomorphic to $H(\fp,\fb)/{\Bbb C}^*$.
		 Moreover, if $\Sigma=\P^1$, the image of $T$ consists of those hyperbolic metrics $\tilde g$, which satisfy the following: There exist  $r\ge 0$ points $q_1,\dots, q_r\in \P^1\setminus \{ p_1,\dots, p_k \}$ as well as $m\in \Z^k$ and $n\in \Z_{>0}^{r}$ such that the following holds:
		 	\begin{enumerate}[(a),itemsep=0pt,topsep=2pt]
		 		\item \label{It_HypMetrSmoothAwayFrom}
		 		$\tilde g$ is smooth on $\P^1\setminus \{ p_1,\dots, p_k , q_1,\dots, q_r\}$.
		 		\item For all $j\le \ell$ we have $m_j>\b_j -1$ and
			 		\begin{equation}
				 		\label{Eq_NumCondOrders}
			 			\sum_{j=1}^\ell m_j +\sum_{j=\ell}^k (\b_j-1) +\sum_{j=1}^r n_j = -6.
			 		\end{equation}
		 		\item \label{It_OrderConicalPt}
		 		If $j\le \ell$, then $\ord_{p_j}\tilde g = m_j -\b_j$.
		 		\item If $j>\ell$, then $\tilde g$ has a cusp singularity at $p_j$.
		 		\item \label{It_SpecialConicalPts}
		 		Each $q_j$ is a conical singularity of $\tilde g$ of order $n_j$.
		 	\end{enumerate}
\end{proposition}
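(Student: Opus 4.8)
The plan is to study $T$ through the factorisation $T=\bar T\circ\pi$ and the bijections in~\eqref{Diagr_Moduli}. If $\cM_k^\ell(\fp,\fb)=\varnothing$ there is nothing to prove, since then $H(\fp,\fb)=\varnothing$ by~\autoref{NeceSuffCondition_spK}; so assume it is non-empty. First I would check that $T$ descends to $\cR_k^\ell(\fp,\fb)$: by~\eqref{Eq_UVrelation} the associated hyperbolic metric of a structure given locally by $(u,\Xi=\Xi_0\,dz^3)$ is $\tilde g=16\,|\Xi_0|^2 e^{2u}\,|dz|^2$, and replacing $\Xi$ by $\lambda\Xi$ with $\lambda\in\C^*$ rescales the special K\"ahler metric, hence replaces $u$ by $u-\log|\lambda|$ (cf.\ Section~\ref{Sect_spKStrVsMetrics}), while $16\,|\lambda\Xi_0|^2 e^{2(u-\log|\lambda|)}=16\,|\Xi_0|^2 e^{2u}$. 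Thus $\tilde g$ is unchanged, $T$ is constant on the fibres of $\pi$ and on $\R_{>0}$-rescalings, and it induces a map $\bar T\colon\cR_k^\ell(\fp,\fb)\to\cR_{\mathrm{hyp}}$ with the same image as $T$.

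Next I would prove that $\bar T$ is injective, which is the heart of the matter. Suppose $(g,\nabla)$ and $(g',\nabla')$ in $\cM_k^\ell(\fp,\fb)$, with associated cubic forms $\Xi,\Xi'$ and metrics $e^{-u}|dz|^2,\,e^{-u'}|dz|^2$, have the same associated hyperbolic metric. Equating the two expressions for it gives $16\,|\Xi_0|^2 e^{2u}=16\,|\Xi_0'|^2 e^{2u'}$, so the meromorphic function $\phi:=\Xi/\Xi'$ on $\Sigma$ satisfies $|\phi|=e^{u'-u}$ off the finite singular sets. Since $g$ and $g'$ have the same singularity type---conic or logarithmic---of the same order $\tfrac12\b_j$ at each $p_j$, comparison with the local models~\eqref{Eq_ModelLogSing} and~\eqref{Eq_FlatConicalStr} shows $u-u'$ to be bounded near each $p_j$ (the leading coefficients of $g$ and $g'$ may differ, but only by a bounded multiplicative factor); at every other point $g$ and $g'$ are regular, so $u$ and $u'$ are individually bounded there. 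Hence $u'-u$ is bounded on $\Sigma$, so $|\phi|$ is bounded above and below by positive constants; since a non-constant meromorphic function on a compact Riemann surface is surjective onto $\P^1$, $\phi$ must be a non-zero constant $\lambda$, i.e.\ $\Xi=\lambda\Xi'$. Then $\xi([g])=\xi([g'])$ in $H(\fp,\fb)/\C^*$, and since $\xi$ is a bijection, $[g]=[g']$ in $\cR_k^\ell(\fp,\fb)$. Therefore $\bar T$ is injective and the image of $T$ equals $\bar T\bigl(\cR_k^\ell(\fp,\fb)\bigr)\cong\cR_k^\ell(\fp,\fb)\cong H(\fp,\fb)/\C^*$, proving the first assertion.

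For $\Sigma=\P^1$ I would read off (a)--(e) from~\autoref{Thm_spKviaHyperbXi_general}. If $\tilde g=T(g,\nabla)$ has associated cubic form $\Xi$, put $m_j:=\ord_{p_j}\Xi$ and let $q_1,\dots,q_r$ be the zeros of $\Xi$ outside $\{p_1,\dots,p_k\}$, of orders $n_i:=\ord_{q_i}\Xi>0$. By uniqueness $(g,\nabla)$ is the structure built from $(\tilde g,\Xi)$, so its singular locus lies in $\supp(\Xi)\cup\supp D$, $D$ being the divisor represented by $\tilde g$; parts (i)--(iii) of~\autoref{Thm_spKviaHyperbXi_general} and the prescribed orders at the $p_j$ then force: $\tilde g$ has a cusp at $p_j$ for $j>\ell$, whence $m_j=\b_j-1$; for $j\le\ell$ the order of $\tilde g$ at $p_j$ equals $m_j-\b_j$, which is admissible only if $m_j-\b_j>-1$; at each $q_i$, regularity of $(g,\nabla)$ forces $\tilde g$ to be conical of order exactly $n_i$; and $\tilde g$ is smooth off $\{p_1,\dots,p_k,q_1,\dots,q_r\}$. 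The identity $\sum_j m_j+\sum_i n_i=\deg K_{\P^1}^3=-6$ then gives~\eqref{Eq_NumCondOrders}. Conversely, given $\tilde g$ and data $(q_i,m_j,n_i)$ as in (a)--(e), the degree condition~\eqref{Eq_NumCondOrders} guarantees a meromorphic cubic differential $\Xi$ on $\P^1$ with divisor $\sum_j m_j p_j+\sum_i n_i q_i$, unique up to $\C^*$, and the inequalities on the $m_j$ place it in $H(\fp,\fb)$; applying~\autoref{Thm_spKviaHyperbXi_general} to $(\tilde g,\Xi)$ yields a special K\"ahler structure whose singularities, by (i)--(iii), are exactly the $p_j$ with the prescribed conic/logarithmic types and orders $\tfrac12\b_j$---the contribution $\tfrac12(n_i-n_i)=0$ at each $q_i$ being trivial---so it lies in $\cM_k^\ell(\fp,\fb)$ with associated hyperbolic metric $\tilde g$. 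Hence $\tilde g$ lies in the image of $T$.

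The step I expect to be the main obstacle is the injectivity of $\bar T$: it says that the associated hyperbolic metric already pins down the cubic form up to a constant, which is precisely the global phenomenon noted in the introduction and is not evident a priori. The only input needed beyond~\autoref{Thm_spKviaHyperbXi_general} is the boundedness of $u-u'$ near the singular points, where the explicit local models~\eqref{Eq_ModelLogSing},~\eqref{Eq_FlatConicalStr} enter; granted that, one concludes via the Liouville-type property of compact Riemann surfaces. A subtler point in the $\P^1$ case is ensuring that the cubic differential built from the divisor $\sum_j m_j p_j+\sum_i n_i q_i$ has no further zeros, so that $(g,\nabla)$ acquires no spurious singularities away from $\{p_1,\dots,p_k\}$; on $\P^1$ this is automatic since a divisor of degree $-6$ determines such a differential uniquely up to scale.
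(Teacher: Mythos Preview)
Your argument is correct, and for the $\mathbb P^1$ characterization it follows the paper's route essentially verbatim: write down the divisor of $\Xi$, read off the singularity data of $\tilde g$ from parts (i)--(iii) of \autoref{Thm_spKviaHyperbXi_general}, and for the converse reconstruct $\Xi$ from the divisor $\sum m_j p_j+\sum n_i q_i$ of degree $-6$ and feed $(\tilde g,\Xi)$ back into that theorem.

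The genuine difference is in the first assertion. The paper's proof simply invokes \autoref{Prop_spKwithFixedMetric} to pass from $H(\fp,\fb)/\mathbb R_{>0}$ to an \emph{injective} map on $H(\fp,\fb)/\mathbb C^*$; but that proposition only tells you that structures with the same special K\"ahler metric $g$ have cubic forms differing by a unit-modulus constant, which establishes that $T$ is constant on $S^1$-fibres, not that the induced map on $\cR_k^\ell(\fp,\fb)$ is injective. You supply the missing step directly: from $\tilde g=\tilde g'$ you get $|\Xi/\Xi'|=e^{u'-u}$, the local models \eqref{Eq_ModelLogSing}--\eqref{Eq_FlatConicalStr} force $u-u'$ to be bounded, and then Liouville on the compact surface gives $\Xi=\lambda\Xi'$. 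This is a clean self-contained argument.

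What the paper presumably has in mind, though it does not say so, is a divisor argument hiding behind the bijection of \autoref{NeceSuffCondition_spK}: given $\tilde g$, the types and orders of its singularities determine the divisor of $\Xi$ completely (via the same computations you carry out for $\mathbb P^1$), and on a compact Riemann surface a meromorphic section of $K_\Sigma^3$ with prescribed divisor is unique up to $\mathbb C^*$. Your analytic approach trades the (implicit) uniqueness of hyperbolic metrics with prescribed singularities for the boundedness of $u-u'$, which is arguably more transparent and does not require unpacking how the bijection in \autoref{NeceSuffCondition_spK} was built.
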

\begin{proof}
	Using \autoref{NeceSuffCondition_spK}, consider $T$ as a map $T\colon H(\fp, \fb)/\R_{>0}\to \cR_{\mathrm{hyp}}$.
	 By \autoref{Prop_spKwithFixedMetric}, this yields an injective map $H(\fp, \fb)/\mathbb C^*\to \cR_{\mathrm{hyp}}$ with the same image.
	 Hence, we obtain that the image of $T$ is isomorphic to  $H(\fp,\fb)/{\Bbb C}^*$.
	
	Furthermore, think of $\P^1$ as the affine complex line $\C$ compactified by a point at infinity.
	Without loss of generality we can assume that none of  $p_j$ equals $\infty$.
	Then  for $\Xi = \Xi_0\, dz^3\in H(\fp, \fb)$ we have the following expression:
	\begin{equation}
		\label{Eq_AuxXi0}
		\Xi_0 =  A(z-p_1)^{m_1}\dots (z-p_\ell)^{m_\ell}(z-p_{\ell+1})^{\beta_{\ell+1}-1}\cdots(z-p_{k})^{\beta_{k}-1} (z-q_1)^{n_1}\dots (z-q_r)^{n_r}.
	\end{equation}
	Here $A\neq 0$ is a constant, $m_j$ is an integer satisfying $m_j>\beta_j-1$, $n_j$ is a positive integer, and
	$q_1,\dots, q_r$ are those zeros of $\Xi$, which are not contained in $\{ p_1,\dots, p_k \}$.
	Moreover, \eqref{Eq_NumCondOrders} holds since $\Xi$ is regular at $\infty$.
	
	If $[g,\nabla]\in \cM_k^\ell (\fp,\fb)$, where $g=e^{-u}|dz|^2$, then by~\eqref{Eq_UVrelation} we obtain that $\tilde g=e^{2v}|dz|^2$ is smooth on $\P^1\setminus \{ p_1,\dots, p_k , q_1,\dots, q_r\}$.
	Moreover,  \autoref{Thm_spKviaHyperbXi_general} yields \textit{\ref{It_OrderConicalPt}}--\textit{\ref{It_SpecialConicalPts}}.
	
	Conversely, given a hyperbolic metric $\tilde g$ satisfying \textit{\ref{It_HypMetrSmoothAwayFrom}}--\textit{\ref{It_SpecialConicalPts}}, define $\Xi_0$ by~\eqref{Eq_AuxXi0} and put $\Xi:=\Xi_0\, dz^3$.
	Then \autoref{Thm_spKviaHyperbXi_general} yields a special K\"ahler structure $[g,\nabla]\in \cM_{k}^\ell(\fp, \fb)$ whose associated hyperbolic metric is $\tilde g$.
\end{proof}

We note in passing that it is possible to define topologies, or even smooth structures, on $\cM_k^\ell (\fp, \fb)$ and $\cR_k^\ell (\fp, \fb)$ directly along the lines of~\cite{MazzeoWeiss17_TeichmThry}.
This would then require to prove that the map $\Xi$ is a homeomorphism, which seems to be excessive for our modest aims.

\section{Existence of special Kahler metrics on
Riemann surfaces with positive genera}

\begin{corollary}$\phantom{A}$
	\label{Cor_NScondition_spK_genus>0} Let $\Sigma$ be a compact Riemann surface with genus $\gamma>0$. Then $\cM_k^\ell (\fp, \fb)$ is non-empty if and only if  the following three conditions hold{\rm:}

\begin{enumerate}[(i),topsep=3pt,itemsep=1pt]
\item $\b_1+\cdots+\b_k>4\gamma-4$,
\item the line bundle $L=L(\fp,\,\fb)$ has a non-trivial holomorphic section, and
\item for all $\ell+1\leq j\leq k$ we have\ \; $\dim_{\Bbb C}\, H^0(L)>
\dim_{\Bbb C}\, H^0\big(L-p_j\big)$.
\end{enumerate}
Moreover, under the above conditions, $\cM_k^\ell (\fp, \fb)$  is homeomorphic to an open dense subset of a sphere of dimension $2N+1$, where
$N:=\dim_{\Bbb C}\, H^0(L)-1$.
The space $\cR_k^\ell (\fp, \fb)$ is homeomorphic to a Zariski open subset of $\CP^N$.
\end{corollary}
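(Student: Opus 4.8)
The plan is to reduce \autoref{Cor_NScondition_spK_genus>0} to \autoref{NeceSuffCondition_spK} and then unwind the conditions \textit{\ref{It_OrdersOfXi}} (namely $H(\fp,\fb)\neq\varnothing$) into the cohomological statements (ii) and (iii). Condition (i) is literally \autoref{NeceSuffCondition_spK}\,\textit{\ref{It_NCfromGB}}, so nothing needs to be done there. The content is entirely in translating ``$H(\fp,\fb)\neq\varnothing$'' into algebraic-geometric language, which is where the genus-$>0$ case genuinely differs from $\P^1$: unlike on $\P^1$, one cannot simply write down a cubic differential with prescribed divisor, so one must work with $\rH^0(L)$ abstractly.

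First I would recall that, by definition, $\Xi\in H(\fp,\fb)$ is a non-trivial holomorphic section of $L = \cO(K_\Sigma^3 + D)$ with $D = -\sum_{j=1}^\ell [\b_j]p_j - \sum_{j=\ell+1}^k(\b_j-1)p_j$, subject to the single additional constraint $\ord_{p_j}\Xi = \b_j-1$ for $\ell+1\le j\le k$ (equivalently $f_j(\Xi)\neq 0$ in the notation of the excerpt). So $H(\fp,\fb)\neq\varnothing$ means: $\rH^0(L)\neq 0$, and there exists $\Xi\in\rH^0(L)$ not vanishing to excess order at any of $p_{\ell+1},\dots,p_k$. The first requirement is exactly (ii). For the second, note that a section $\Xi\in\rH^0(L)$ has $\ord_{p_j}\Xi > \b_j-1$ precisely when $\Xi$ vanishes at $p_j$ as a section of $L$, i.e. $\Xi\in\rH^0(L-p_j)$; thus the ``bad'' locus is $\bigcup_{j=\ell+1}^k \rH^0(L-p_j)$, a finite union of linear subspaces of $\rH^0(L)$. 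Therefore $H(\fp,\fb)\neq\varnothing$ iff $\rH^0(L)$ is not covered by these subspaces. A vector space over the infinite field $\C$ is never a finite union of proper subspaces, so the condition is: for each $j$, $\rH^0(L-p_j)$ is a \emph{proper} subspace of $\rH^0(L)$, i.e. $\dim_\C \rH^0(L) > \dim_\C \rH^0(L-p_j)$ — which is (iii). (One should also observe that $\rH^0(L-p_j)$ is automatically a subspace of $\rH^0(L)$ via the inclusion $L-p_j\hookrightarrow L$, and that condition (iii) already forces $\rH^0(L)\neq 0$; I would state (ii) separately anyway for emphasis, or note it is implied.) Combining, $H(\fp,\fb)\neq\varnothing$ is equivalent to (ii)+(iii), and \autoref{NeceSuffCondition_spK} then gives the if-and-only-if for $\cM_k^\ell(\fp,\fb)\neq\varnothing$.

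For the final sentence about the homeomorphism type: by \autoref{NeceSuffCondition_spK} the map $\Xi$ is a bijection $\cM_k^\ell(\fp,\fb)\to H(\fp,\fb)/\R_{>0}$, and, as in the proof of \autoref{Cor_NecessCondition_spK}, $H(\fp,\fb)$ is a Zariski open (hence open dense, when nonempty) subset of $\rH^0(L)\setminus\{0\}$ invariant under scaling, so $H(\fp,\fb)/\R_{>0}$ is an open dense subset of the unit sphere $S(\rH^0(L)) \cong S^{2N+1}$ with $N = \dim_\C\rH^0(L)-1$, using the chosen Hermitian inner product to fix the topology (and independence of that choice is routine). Passing to the quotient by $S^1$ via the diagram~\eqref{Diagr_Moduli}, $\cR_k^\ell(\fp,\fb)\cong H(\fp,\fb)/\C^*$ is a Zariski open subset of $\P\rH^0(L)\cong\CP^N$. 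The main obstacle — really the only non-formal point — is the translation of the single condition $f_j(\Xi)\neq 0$ into the vanishing-at-$p_j$ statement and the linear-algebra observation that a $\C$-vector space is not a finite union of proper subspaces; everything else is bookkeeping already done in \autoref{Cor_NecessCondition_spK}. Note that, in contrast to the $\P^1$ case, there is no closed-form dimension count here: $N$ depends on the complex structure and on $(\fp,\fb)$ through $\rH^0(L)$, which is exactly why the statement is phrased in terms of $\dim_\C\rH^0(L)$ rather than a Riemann--Roch number (Riemann--Roch only gives $\dim\rH^0(L) - \dim\rH^1(L) = \deg L + 1 - \gamma$, and $\rH^1(L)$ need not vanish when $\gamma>0$).
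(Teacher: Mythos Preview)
Your proof is correct and follows essentially the same approach as the paper: both reduce to \autoref{NeceSuffCondition_spK} and translate the condition $H(\fp,\fb)\neq\varnothing$ into (ii)+(iii). Your treatment is in fact slightly more explicit in the sufficiency direction, since you invoke the fact that a $\C$-vector space is not a finite union of proper subspaces to produce $\Xi\in\rH^0(L)\setminus\bigcup_{j>\ell}\rH^0(L-p_j)$, whereas the paper simply asserts that (ii) and (iii) yield such a $\Xi$ and argues the necessity of (iii) by contradiction.
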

\begin{proof} Suppose that these three conditions hold.
By the last two ones, there exists a meromorphic cubic differential $\Xi$ which is holomorphic outside $\{p_1,\cdots, p_k\}$ and satisfies    $\ord_{p_j}\Xi \ge [\b_j]$ for $1\leq j\leq \ell$ and  $\ord_{p_j}\Xi = \b_j-1$ for  $\ell+1\leq j\leq k$.
The conclusion follows from the first condition and \autoref{NeceSuffCondition_spK}.

Suppose that $\cM_k^\ell (\fp, \fb)$ is non-empty. The first two conditions follows from Theorem \ref{NeceSuffCondition_spK}. We show the third one by contradiction. Suppose that there exist $\ell+1\leq j\leq k$ such that \[\dim_{\Bbb C}\, H^0(L)\leq
\dim_{\Bbb C}\, H^0\big(L-p_j\big).\]
Then we find $H^0(L-p_j)=H^0(L)$. We pick a special K\"ahler structure $(g,\,\nabla)$ in $\cM_k^\ell (\fp, \fb)$ with associated cubic differential $\Xi$. By Theorem \ref{NeceSuffCondition_spK}, we know that $\Xi$ has order $\b_j-1$ at $p_j$. On the other hand, since $\Xi$ belongs to $H^0(L)=H^0(L-p_j)$, the order of $\Xi$ at $p_j$ should be greater than or equal $\b_j$.
This is a contradiction.
\end{proof}

In the case $\Sigma$ is an elliptic curve (compact Riemann surface of genus one), the statement of the above corollary can be made more explicit.

\begin{corollary}$\phantom{B}$
	\label{Cor_NScondition_EllCurves} For an  elliptic curve  $E$ the space $\cM_k^\ell (\fp, \fb)$ is non-empty if and only if the following three conditions hold{\rm :}
	\begin{enumerate}[(i),topsep=3pt,itemsep=1pt]
	\item \label{It_EllCurveCond1}$\b_1+\cdots+\b_k>0$;
	\item The line bundle $L = L(\fp, \fb)$ has a non-trivial holomorphic section;
	\item\label{It_EllCurveCond3} If $\deg\, L=1$, then for all $\ell+1\leq j\leq k$, the divisor $-D(\fp, \fb) +p_j$ is \emph{not} equivalent to zero.
\end{enumerate}
Moreover, under these conditions, $\cM_k^\ell (\fp, \fb)$ is homeomorphic to an open dense subset of a sphere of dimension $2N+1$, where
$N:=\dim_{\Bbb C}\, H^0(L)-1$.
The space $\cR_k^\ell (\fp, \fb)$ is homeomorphic to a Zariski open subset of $\CP^N$.
\end{corollary}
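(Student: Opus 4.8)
The plan is to obtain \autoref{Cor_NScondition_EllCurves} as the specialisation of \autoref{Cor_NScondition_spK_genus>0} to the genus one case, the only work being to rewrite its third condition in the divisor-theoretic form stated here. Putting $\gamma = 1$ in \autoref{Cor_NScondition_spK_genus>0}, non-emptiness of $\cM_k^\ell(\fp, \fb)$ is equivalent to the conjunction of: (a) $\b_1 + \cdots + \b_k > 0$; (b) $H^0(L)\neq 0$; (c) $\dim_{\C} H^0(L) > \dim_{\C} H^0(L - p_j)$ for every $\ell + 1 \le j \le k$. Conditions (a) and (b) are exactly conditions \ref{It_EllCurveCond1} and (ii) of the corollary, so the entire task reduces to showing that, in the presence of (b), condition (c) is equivalent to condition \ref{It_EllCurveCond3}.

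First I would record that on an elliptic curve $E$ the canonical bundle is trivial, so $L = \cO\bigl(K_E^3 + D\bigr) \cong \cO(D)$ and $\deg L = \deg D$; in particular $L - p_j \cong \cO(D - p_j)$ is trivial if and only if the divisor $-D(\fp, \fb) + p_j$ is equivalent to zero. Next I would invoke the standard dimension formula on $E$: for a line bundle $M$ one has $\dim_{\C} H^0(M) = \deg M$ when $\deg M \ge 1$; $\dim_{\C} H^0(M) = 1$ or $0$ according as $M$ is or is not trivial when $\deg M = 0$; and $\dim_{\C} H^0(M) = 0$ when $\deg M < 0$. Since (b) says that $L$ is effective, it forces $\deg L \ge 0$, and moreover forces $L$ to be trivial when $\deg L = 0$. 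Now I would run the case analysis on $\deg L$. If $\deg L \ge 2$, then $\deg(L - p_j) = \deg L - 1 \ge 1$, so $\dim H^0(L) = \deg L > \deg L - 1 = \dim H^0(L - p_j)$ and (c) holds automatically. If $\deg L = 0$, then $L$ is trivial, so $\dim H^0(L) = 1 > 0 = \dim H^0(L - p_j)$ and (c) again holds automatically. If $\deg L = 1$, then $\dim H^0(L) = 1$ while $\deg(L - p_j) = 0$, so $\dim H^0(L - p_j)$ is $1$ or $0$ according as $L - p_j$ is or is not trivial; hence (c) holds for every $j$ precisely when no $L - p_j$ is trivial, i.e. precisely when condition \ref{It_EllCurveCond3} holds. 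This establishes the equivalence of (c) with condition \ref{It_EllCurveCond3}, and hence the non-emptiness criterion.

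The remaining assertions --- that under these conditions $\cM_k^\ell(\fp, \fb)$ is homeomorphic to an open dense subset of a sphere of dimension $2N + 1$ and $\cR_k^\ell(\fp, \fb)$ to a Zariski open subset of $\CP^N$ with $N = \dim_{\C} H^0(L) - 1$ --- are inherited verbatim from \autoref{Cor_NScondition_spK_genus>0}. I do not anticipate a genuine obstacle: the only point requiring care is the case analysis on $\deg L$, in particular recognising that the borderline degree one case is the sole situation in which condition (c) is not automatic and checking there that ``$L - p_j$ trivial'' matches the stated divisor condition $-D(\fp, \fb) + p_j \sim 0$.
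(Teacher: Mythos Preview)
Your argument is correct. The paper itself remarks that the corollary \emph{could} be deduced from \autoref{Cor_NScondition_spK_genus>0} but then opts for a different, more direct route, so your approach is precisely the one the authors acknowledge and set aside.

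The paper's proof bypasses \autoref{Cor_NScondition_spK_genus>0} entirely and works straight from \autoref{NeceSuffCondition_spK}. For the ``only if'' direction it argues that when $\deg L = 1$ and some $-D + p_j \sim 0$, the one-dimensional space $H^0(L)$ is generated by $f\,dz^3$ with $(f) = -D + p_j$, forcing $\ord_{p_j}\Xi = \b_j$ rather than $\b_j - 1$, a contradiction. For the ``if'' direction it constructs the required cubic differential explicitly via the Abel--Jacobi theorem: when $\deg L = 0$ the bundle is trivial and $f\,dz^3$ with $(f) = -D$ works; when $d := \deg L > 0$ one first finds $q$ with $dq \sim D$, and if $q$ happens to land in $\{p_{\ell+1},\dots,p_k\}$ one redistributes $dq$ as $q_1 + \cdots + q_d$ avoiding those points (possible when $d \ge 2$), while the residual case $d = 1$ is exactly what condition~\ref{It_EllCurveCond3} excludes. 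Your route is shorter and cleaner once \autoref{Cor_NScondition_spK_genus>0} is available, reducing everything to the standard dimension count for line bundles on an elliptic curve; the paper's route is more constructive, actually exhibiting the cubic differential, and makes the role of the Abel--Jacobi map explicit.
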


\begin{proof}
	While a proof of this corollary could be obtained from \autoref{Cor_NScondition_spK_genus>0}, we prefer a more direct approach.
	
Thus, suppose there exists a special K\"ahler structure $(g,\,\nabla)$ as in the statement of this corollary.
 The first two condtions follow from Theorem \ref{NeceSuffCondition_spK} directly. Suppose $\deg\,L=1$. Then $H^0(L)$ has dimension one by the Riemann-Roch theorem.
 If there exists $\ell+1\leq j\leq k$ such that the divisor $-D+p_j:=-D(\fp,\,\fb) +p_j$ is equivalent to zero, then there exists an elliptic function $f$ such that $(f)= -D+p_j$ and $H^0(L)$ is generated by $f\,dz^3$, where $dz$ is a nonwhere vanishing holomorphic one-form on $E$.
 Furthermore, the associated cubic differential $\Xi$ equals ${\rm Const.}\,f\,dz^3$ and has order $\b_j$ at $p_j$. This is a contradiction, which finishes the proof of the ``only if'' part.
\medskip

Assume that \textit{\ref{It_EllCurveCond1}}--\textit{\ref{It_EllCurveCond3}} hold. We divide the proof of the ``if'' part into the following two cases.

\noindent {\it Case 1.}\ Suppose $\deg\, L=0$.
Then $L$ is trivial, in particular $L$ has a non-trivial holomorphic section, and there exists an elliptic function $f$ on $E$ such that $(f)=-D$.
By Theorem \ref{NeceSuffCondition_spK}, there exists a special K\"ahler structure $(g,\nabla)$ whose associated cubic differential is $f\,dz^3$.

\noindent {\it Case 2.}\  Suppose $d:=\deg\, L>0$. By the Abel-Jacobi theorem, we can find a point $q\in E={\Bbb C}/\Lambda$ and an elliptic function $f$ on $E$ such that $(f)=dq -D$.
If $q\notin\{p_{\ell+1},\cdots, p_k\}$, then by Theorem \ref{NeceSuffCondition_spK} there exists such a special K\"ahler structure $(g,\nabla)$ whose associated cubic differential is $f\,dz^3$, where $dz$ is a non-where vanishing holomorphic 1-form on $E$.
Hence, it remains to consider the case $q\in \{p_{\ell+1},\cdots, p_k\}$.

{\it Subcase 2.1.} Suppose $d\geq 2$.
Since there exist $q_1,\cdots,q_d$ in
$E\setminus \{p_{\ell+1},\cdots, p_k\}$ such that $q_1+\cdots+q_d\equiv dq$ ({\rm mod}\,$\Lambda$),
we can find an elliptic function $f$ on $E$ such that
$(f)=q_1+\cdots+q_d-D$.
We are done.

{\it Subcase 2.2.} Suppose $d=1$. Then there exists $\ell+1\leq j\leq k$ such that
$-D+p_j\sim 0$.
However, this possibility is excluded by~\textit{\ref{It_EllCurveCond3}}.
\end{proof}

		

We conclude this section by proposing a conjecture about the existence of certain cubic differentials and special K\" ahler structures on compact Riemann surfaces of genera greater than one.
\begin{conj}
\label{conj:spk} {\rm
Let $\gamma$ be an integer greater than $1$ and ${\frak b}=(\beta_1,\cdots,\beta_k)$ be as in Definition \ref{defn:moduli}. Assume that the numerical condition \eqref{Eq_NecessCondMixedSing} holds. Then there exists a compact Riemann surface $\Sigma$ of genus $\gamma$ and a $k$-tuple ${\frak p}=(p_1,\cdots, p_k)$ of pairwise distinct points in $\Sigma$ such that $H(\fp,\fb)$---and consequently also $\cM_k^\ell (\fp, \fb)$---is non-empty.}
\end{conj}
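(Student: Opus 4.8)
The plan is to reduce \autoref{conj:spk} to the effectivity of a single line bundle on a carefully chosen curve, and then to construct that curve. Under \eqref{Eq_NecessCondMixedSing}, the second inequality is equivalent to
\[
\deg L(\fp,\fb)\;=\;6(\gamma-1)-\sum_{j=1}^{\ell}[\b_j]-\sum_{j=\ell+1}^{k}(\b_j-1)\;\ge\;0 ,
\]
while the first inequality is exactly the first of the three conditions in \autoref{Cor_NScondition_spK_genus>0}. Hence, by that corollary, it suffices to produce a compact Riemann surface $\Sigma$ of genus $\gamma$ and pairwise distinct points $p_1,\dots,p_k$ such that $\rH^0(L)\neq 0$ and no $p_j$ with $j>\ell$ is a base point of $|L|$ (the latter is condition (iii) there: a section not vanishing at $p_j$ lies in $\rH^0(L)\setminus\rH^0(L-p_j)$). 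For such $(\Sigma,\fp)$ a general section of $L$ is a nonzero meromorphic cubic differential $\Xi$ with $\ord_{p_j}\Xi\ge[\b_j]$ for $j\le\ell$, with $\ord_{p_j}\Xi=\b_j-1$ for $j>\ell$, and holomorphic elsewhere, i.e.\ $\Xi\in H(\fp,\fb)$.

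\emph{The range $\deg L\ge\gamma$.} Here a generic pair $(\Sigma,\fp)$ works. Write $D(\fp,\fb)=D_+-D_-$ with $D_\pm\ge 0$ supported on $\fp$. Since $\gamma\ge 2$, the bundle $\cO(K_\Sigma^3+D_+)$ has degree $\ge 2\gamma$, hence is base-point free and non-special with $\dim_\C\rH^0=5(\gamma-1)+\deg D_+$. A standard dimension count then shows that for a general pair $(\Sigma,\fp)$ the system $|K_\Sigma^3+D_+-D_-|$ is non-empty precisely when $\deg L\ge\gamma$, and that in this case a general member has none of the $p_j$, $j>\ell$, as a base point. This settles the conjecture for $\deg L\ge\gamma$ — in particular whenever $\deg L\ge 2\gamma-1$, where $L$ is automatically non-special for any $(\Sigma,\fp)$.

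\emph{The remaining range $0\le\deg L\le\gamma-1$: the crux.} Here effectivity of $L$ is a condition of positive codimension on $\mathrm{Pic}^{\deg L}(\Sigma)$, so $(\Sigma,\fp)$ must be chosen non-generically, and I would proceed by degeneration. Build a stable nodal curve $\Sigma_0$ of arithmetic genus $\gamma$ — a tree of copies of $\P^1$ together with at most one component of small positive genus — and a ``limit cubic differential'' on it: on each rational component write down, just as in the proof of \autoref{Cor_NecessCondition_spK}, a meromorphic cubic differential with prescribed orders at the marked points and at the nodes, choosing the node orders so that the pieces match and glue to a section of the cube of the dualizing sheaf of $\Sigma_0$. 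Then smooth $\Sigma_0$ in a one-parameter family and use semicontinuity of $\dim_\C\rH^0$ to transport the section to a nearby smooth genus-$\gamma$ curve $\Sigma$, with the prescribed orders at the specialized points $\fp$ and all remaining zeros off $\{p_{\ell+1},\dots,p_k\}$. The genuine difficulty is purely combinatorial: one must choose the dual graph, the distribution of $p_1,\dots,p_k$ among the components, and the node orders so that (i) the genera sum to $\gamma$; (ii) on each component the local degree balance for a cubic differential with the prescribed orders (at marked points and at nodes) is solvable — elementary on $\P^1$, but requiring a non-special linear series on a positive-genus component; and (iii) globally $\sum(\text{orders})=6(\gamma-1)$. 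Proving that the two inequalities in \eqref{Eq_NecessCondMixedSing} are precisely what make such a consistent assignment possible is the heart of the matter. An alternative that I expect to meet the same bookkeeping is to fix a curve carrying a totally ramified covering $\Sigma\to\P^1$, pull back an explicit cubic differential, and correct for the zeros and poles introduced along the ramification divisor.
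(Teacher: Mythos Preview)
The statement you are trying to prove is labeled a \emph{Conjecture} in the paper and is explicitly left open there; the authors settle only $\gamma=0$ (\autoref{Cor_NecessCondition_spK}) and $\gamma=1$ (\autoref{Cor_NScondition_EllCurves}). There is therefore no paper proof to compare against, and anything complete you write would go beyond the paper.

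Your reduction via \autoref{Cor_NScondition_spK_genus>0} is correct, and the range $\deg L\ge 2\gamma-1$ is essentially routine. In the range $\gamma\le\deg L\le 2\gamma-2$ you still owe an argument for condition~(iii): the points $p_j$ with $j>\ell$ enter the very definition of $L$, so you cannot move them ``generically with respect to $|L|$'' after the fact; one must check that varying $(\Sigma,\fp)$ actually reaches a line bundle with the required base-point behaviour at each such $p_j$, and this can be delicate when $k$ is small relative to $\gamma$.

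The genuine gap is in the range $0\le\deg L\le\gamma-1$. First, your appeal to semicontinuity of $h^0$ points the wrong way: $h^0$ is \emph{upper} semicontinuous in flat families, so a section of the cube of the dualizing sheaf on the nodal limit $\Sigma_0$ need not survive on nearby smooth fibres. Showing that it does is precisely the content of a limit linear series argument, and it is not free. Second, you yourself identify the combinatorial assignment of node orders making the pieces glue as ``the heart of the matter'' and do not carry it out. What you have written is a reasonable strategy outline rather than a proof, which is consistent with the authors' decision to state the result as a conjecture.
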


\section{On compactifications of the moduli spaces}

A consequence of \autoref{Cor_NecessCondition_spK}	is that the moduli spaces $\cM_k^\ell (\fp, \fb)$ and $\cR_k^\ell (\fp, \fb)$ are  non-compact provided there is at least one logarithmic singularity, i.e., if $\ell <k$.
The purpose of this section is to describe compactifications of these moduli spaces.

\medskip

Notice first, that we have the following natural stratification
\begin{equation}
	\label{Eq_StratifCubicDifferentials}
	\rH^0(L)\setminus \{ 0 \} = \bigcup_{\fm\in \Z_{\ge 0}^{k-\ell}} H(\fp, \fb_\fm),
\end{equation}
where $\fb_\fm:= \fb + (0,\fm)$.
Clearly, $H(\fp, \fb)=H(\fp, \fb_0)$ is the stratum with the highest dimension.
For example, in the case $\Sigma = \P^1$ we have
\begin{equation*}
	\dim_\C H(\fp,\fb_m) =-6+k-\ell -\sum_{j=1}^k [\b_j] - \sum_{j=1}^{k-\ell} m_j
\end{equation*}
provided $H(\fp,\fb_m)$ is non-empty.

Moreover, this stratification is in fact finite.
Indeed, if $\gamma$ is the genus of $\Sigma$ and $\Xi\in H(\fp, \fb_\fm)$, then
\begin{equation*}
	\begin{aligned}
		6(\gamma -1)&=\sum_{p\in \supp(\Xi)} \ord_p\Xi\  \ge   \sum_{j=1}^k  \ord_{p_j}\Xi
		\ge
		\sum_{ j\le\ell } (\beta_j -1) + \sum_{ j>\ell} (\beta_j + m_{j-\ell} -1)\\
		& = |\fb| + |\fm| -k,
	\end{aligned}
\end{equation*}
which implies the upper bound $|\fm|\le k-6(\gamma -1) -|\fb|$.

Notice also that each stratum is invariant under the action of $\C^*$.

\medskip

Starting from a different perspective, consider the set $\overline\cM_k^\ell (\fp, \fb)$ which consists of all special K\"ahler structures on $\Sigma$ satisfying the following:
\begin{itemize}[itemsep=0pt,topsep=3pt]
	\item The associated cubic form $\Xi$ is non-trivial and meromorphic;
	\item For $j\le \ell$  we have a conical singularity at $p_j$ and $\ord_{p_j} (g,\nabla) =\tfrac 12 \b_j$;
	\item  For $j>\ell$  we have a logarithmic singularity at  $p_j$ and $\ord_{p_j} (g,\nabla)\ge \frac 12 \b_j$.
\end{itemize}
Just like above, we do not distinguish two special K\"ahler structures if they differ by a rescaling of the metric.

Notice that the only difference in the definitions of $\overline\cM_k^\ell (\fp, \fb)$ and $\cM_k^\ell (\fp, \fb)$ is that $\frac 12\b_j$ is only \emph{a lower bound} for the order of the logarithmic singularity at $p_j$ ($j> \ell$).

Furthermore,  $\overline\cM_k^\ell (\fp, \fb)$ admits a stratification akin to~\eqref{Eq_StratifCubicDifferentials}, namely
\begin{equation}
	\label{Eq_StratifModulSpace}
	\overline\cM_k^\ell (\fp, \fb) =\bigcup_{\fm\in \Z_{\ge 0}^{k-\ell}} \cM_k^\ell (\fp, \fb_\fm)
\end{equation}
with the top stratum being $\cM_k^\ell (\fp, \fb)$.
Moreover, the map $\Xi$ extends as a bijective  map
\begin{equation*}
	\bar\Xi\colon \overline\cM_k^\ell (\fp, \fb)\to \rH^0(L)\setminus\{ 0\}/\R_{>0}
\end{equation*}
such that each $\cM_k^\ell (\fp, \fb_\fm)$ is mapped bijectively to  $H(\fp, \fb_\fm)$.
In particular, \eqref{Eq_StratifModulSpace} consists of a finite number of strata.
Just like in the case of $\cM_k^\ell(\fp,\fb)$, we use $\bar\Xi$ to endow $\overline\cM_k^\ell (\fp, \fb)$ with a topology.

Clearly, we can also construct a compactification of $\cR_k^\ell (\fp, \fb)$ in a similar manner.
Namely, defining
\begin{equation*}
	\overline\cR_k^\ell (\fp, \fb):=\bigcup_{\fm\in \Z_{\ge 0}^{k-\ell}} \cR_k^\ell (\fp, \fb_\fm)
\end{equation*}
we obtain  a bijection
\begin{equation*}
	\bar \xi\colon \overline\cR_k^\ell (\fp, \fb)\to \rH^0(L)\setminus \{ 0 \}/\C^* = \P\rH^0(L)
\end{equation*}
fitting into the commutative diagram
\begin{equation*}
	\begin{CD}
		\overline\cM_k^\ell (\fp, \fb)@>\bar\Xi>>S(\rH^0(L))\\
		@V\pi VV @VV V\\
		\overline\cR_k^\ell (\fp, \fb) @>\bar\xi >> \P\rH^0(L),
\end{CD}
\end{equation*}
cf.~\eqref{Diagr_Moduli}.
Here $S(\rH^0(L))$ denotes the sphere of unite radius in $H^0(L)$ with respect to some norm.

Summarizing, we obtain the following.
\begin{proposition}
\label{prop:cptf}
	The map $\bar \Xi$ establishes a natural bijective correspondence between $\overline\cM_k^\ell (\fp, \fb)$ and the unit sphere in $\rH^0(L)$.
	Likely, $\bar \xi$  establishes a natural bijective correspondence between $\overline\cR_k^\ell (\fp, \fb)$ and  $\P\rH^0(L)$. \qed
\end{proposition}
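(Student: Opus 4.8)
The plan is to deduce \autoref{prop:cptf} from \autoref{NeceSuffCondition_spK} one stratum at a time. The two ingredients are: (a) the decompositions \eqref{Eq_StratifModulSpace} of $\overline\cM_k^\ell(\fp,\fb)$ and \eqref{Eq_StratifCubicDifferentials} of $\rH^0(L)\setminus\{0\}$ are genuine \emph{disjoint} unions indexed by the same finite set of multi-indices $\fm\in\Z_{\ge 0}^{k-\ell}$; and (b) on each stratum the map $\bar\Xi$ restricts to the bijection $\cM_k^\ell(\fp,\fb_\fm)\to H(\fp,\fb_\fm)$ furnished by \autoref{NeceSuffCondition_spK} with $\fb$ replaced by $\fb_\fm$.

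First I would check disjointness. On the side of structures, for $[g,\nabla]\in\overline\cM_k^\ell(\fp,\fb)$ the orders $\ord_{p_j}(g,\nabla)$ with $j>\ell$ are intrinsic invariants, and by \autoref{Thm_SpKHyperbLoc} (equivalently \autoref{Thm_spKviaHyperbXi_general}) they equal $\tfrac12\bigl(\ord_{p_j}\Xi+1\bigr)$; hence the integer $m_{j-\ell}:=\ord_{p_j}\Xi-(\b_j-1)\ge 0$ is determined by $[g,\nabla]$, so $[g,\nabla]$ lies in exactly one $\cM_k^\ell(\fp,\fb_\fm)$. On the side of cubic differentials, any nonzero $\Xi\in\rH^0(L)=\rH^0\bigl(\cO(K_\Sigma^3+D)\bigr)$ automatically satisfies $\ord_{p_j}\Xi\ge [\b_j]$ for $j\le\ell$ and $\ord_{p_j}\Xi\ge\b_j-1$ for $j>\ell$, so again $\fm$ is read off uniquely from the orders at $p_{\ell+1},\dots,p_k$, which exhibits \eqref{Eq_StratifCubicDifferentials} as a disjoint union. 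Finiteness of both stratifications is the bound $|\fm|\le k-6(\gamma-1)-|\fb|$ already established above.

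Next I would assemble $\bar\Xi$. Because the stratumwise bijections of \autoref{NeceSuffCondition_spK} are compatible with these two disjoint decompositions, they glue into a single bijection from $\overline\cM_k^\ell(\fp,\fb)$ onto $\bigcup_\fm H(\fp,\fb_\fm)/\R_{>0}=\bigl(\rH^0(L)\setminus\{0\}\bigr)/\R_{>0}$; normalizing representatives identifies the latter with the unit sphere $S(\rH^0(L))$. Here one uses that rescaling $g\mapsto\lambda g$ rescales $\Xi$ by the same positive factor (by \eqref{Eq_UVrelation}), so the passage to $\R_{>0}$-orbits is consistent with the stratification, each stratum being $\R_{>0}$-invariant. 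For the statement about $\bar\xi$ I would pass to the further quotient by the circle action, i.e.\ by the fibers of $\pi$: by \autoref{Prop_spKwithFixedMetric} this $S^1$-action corresponds under $\bar\Xi$ to multiplication by unit complex scalars on $S(\rH^0(L))$, so the induced map on quotients is a bijection $\overline\cR_k^\ell(\fp,\fb)\to S(\rH^0(L))/S^1=\P\rH^0(L)$, and it makes the displayed square commute by construction.

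There is no serious obstacle here; the argument is pure bookkeeping. The one point that deserves care is verifying that the two stratifications are disjoint and index-compatible -- that the order data at the logarithmic points $p_{\ell+1},\dots,p_k$ is \emph{simultaneously} an intrinsic invariant of the special K\"ahler structure and the datum cutting out the stratum $H(\fp,\fb_\fm)\subset\rH^0(L)$ -- and this is precisely what \autoref{Thm_SpKHyperbLoc} together with the identification $L=\cO(K_\Sigma^3+D)$ provide.
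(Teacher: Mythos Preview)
Your proposal is correct and follows essentially the same approach as the paper: the proposition is stated with a \qed\ because the preceding discussion in the section constitutes its proof, and that discussion amounts exactly to your stratum-by-stratum application of \autoref{NeceSuffCondition_spK} together with \autoref{Prop_spKwithFixedMetric} for the passage to $\overline\cR_k^\ell$. Your write-up is in fact more careful than the paper's, explicitly verifying disjointness of the two stratifications and the $\R_{>0}$-equivariance of the map $\Xi$.
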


\bibliography{references}
\end{document}